\documentclass[12pt]{amsart}
\usepackage{epsfig}
\usepackage{mathtools}
\usepackage[subnum]{cases}
\usepackage{enumerate}
\usepackage{bbm,bm}
\usepackage{amsmath,amssymb,mathrsfs,amsthm}
\usepackage{color}
\usepackage{verbatim}
\usepackage{pgf,tikz,pgfplots,tikz-3dplot-circleofsphere,tkz-euclide}
\pgfplotsset{compat=1.14}
\usetikzlibrary{arrows, shapes.geometric}
\usepackage{graphicx}
\usepackage{ctable}
\graphicspath{ {./images/} }

\textwidth6.6truein \oddsidemargin-.5truecm \evensidemargin-.5truecm
%\usepackage[bookmarks=true,
%bookmarksnumbered=true, breaklinks=true,
%pdfstartview=FitH, hyperfigures=false,
%plainpages=false, naturalnames=true,
%colorlinks=true,pagebackref=true,
%pdfpagelabels]{hyperref}

\usepackage[hang]{footmisc} %flushes footnotes. to flush left use option ``[flushmargin]"
%\usepackage[style=numeric]{biblatex}

%\usepackage{pgf,tikz,pgfplots,tikz-3dplot-circleofsphere,tkz-euclide}%\pgfplotsset{compat=1.14}
%\usetikzlibrary{arrows}

\newtheorem{theorem}{Theorem}[section]

\newtheorem{lemma}[theorem]{Lemma}
\newtheorem{corollary}[theorem]{Corollary}

\newtheorem{remark}[theorem]{Remark}

\newtheorem{conjecture}[theorem]{Conjecture}
\newtheorem*{property*}{Property}

\newcommand{\R}{\mathbb{R}}

\DeclareMathOperator{\dell}{del}
\DeclareMathOperator{\divv}{div}
\DeclareMathOperator*{\argmax}{argmax}
\DeclareMathOperator{\dist}{dist}
\DeclareMathOperator{\spann}{span}

%%%%%%%%%%%%%%%%%%%%%%%%%%%%%%%%%%%%%%%%%%%%%%%%%%
\begin{document}
\setcounter{footnote}{0}

\title[Surface areas of equifacetal polytopes inscribed in the unit sphere $\mathbb{S}^2$]{Surface areas of equifacetal polytopes \\inscribed in the unit sphere $\mathbb{S}^2$}

    \author[N. Freeman, S. Hoehner, J. Ledford, D. Pack and B. Walters]{Nicolas Freeman, Steven Hoehner, Jeff Ledford, David Pack \\and Brandon Walters}
    \date{\today}

	\subjclass[2020]{Primary: 52A40; Secondary 52A38; 52B10} \keywords{bipyramid, equifacetal, polyhedron, polytope, surface area}	
\maketitle

\begin{abstract}
This article is concerned with the problem of placing seven or eight points on the unit sphere $\mathbb{S}^2$ in $\mathbb{R}^3$ so that the surface area of the convex hull of the points is maximized. In each case, the solution is given for convex hulls with congruent isosceles or congruent equilateral triangular facets.
\end{abstract}

%%%%%%%%%%%%%%%%%%%%%%%%%%%%%%%%%%%
\section{Introduction and main results}

A classical problem in convex and discrete geometry asks: 
\begin{quote}
    \emph{Among all polytopes with $K$ vertices chosen from the unit sphere $\mathbb{S}^2$, which one has the greatest surface area?}
\end{quote}
\vspace{1mm}

\noindent This question is interesting in its own right, but also in view of its applications, which include geometric algorithms \cite{akkiraju}, crystallography \cite{DHL} and quantum theory \cite{kazakov-thesis}. Despite its simplicity, this question has turned out to be quite difficult to answer, and the complete solution has been given only in a handful of cases. Naturally, in all of the known cases, the maximizers exhibit the highest degree of symmetry possible. For example, among all convex polytopes inscribed in $\mathbb{S}^2$ with 4, 6 or 12 vertices, respectively, the maximum surface area polytope is the regular tetrahedron, octahedron, or icosahedron, respectively. 

In the table below, we list the known solutions of the surface area maximization problem for $K\geq 4$ points on the unit sphere $\mathbb{S}^2=\{(x,y,z)\in\R^3:\, x^2+y^2+z^2=1\}$. For an integer $K\geq 4$, let $\mathcal{I}_K$ denote the set of all convex polytopes with at most $K$ distinct vertices chosen from the unit sphere $\mathbb{S}^2$, and let $\mathcal{M}_K$ denote the corresponding subset of polytopes with congruent isosceles or congruent equilateral triangular facets. Also, $S(P)$ denotes the surface area of a polytope $P\in\mathcal{I}_K$. The second column lists the (global) surface area maximizers with $K$ vertices, and the third column gives the surface area of the  maximizer. 

\begin{center}
    \begin{tabular}{cccc}
    $K$   & $\argmax_{P\in\mathcal{I}_{K}}S(P)$ & $\max_{P\in\mathcal{I}_{K}}S(P)$ & Citation \\ \specialrule{.2em}{.1em}{.1em} 
    4  & regular tetrahedron & $8/\sqrt{3}\approx 4.62$  &  e.g., \cite{Toth-RegularFigures}\\ \hline
    5  & triangular bipyramid &  $3\sqrt{15}/2\approx 5.81$  & \cite{DHL}\\ \hline
    6  & regular octahedron   & $4\sqrt{3}\approx 6.93$   & e.g., \cite{Toth-RegularFigures}\\ \hline
    7  & --   & -- & --   \\ \hline%$\frac{5}{4}\sqrt{50-6\sqrt{5}}\approx 7.56$ &  \cite{Hoehner-Ledford-2022}\\ \hline
    8  &  -- & -- & -- \\\hline
    9  & -- & -- & -- \\ \hline
    10 & -- & -- & --\\ \hline
    11  &  -- & -- & --\\ \hline
    12 & regular icosahedron  & $2\sqrt{75}-2\sqrt{15}\approx 9.57$  & e.g., \cite{Toth-RegularFigures}\\ \hline
    $\geq 13$ & -- & -- & --
    \end{tabular}
    \end{center}
    
\vspace{5mm}
    
    Note that the global surface area maximizer in each of the known cases ($K=4,5,6,12$) is a member of $\mathcal{M}_K$. These maximizers are depicted in the figures below.  
    %%%%%%%%%%%%%%%%%%%%%%%%
    
    \begin{center}
    \def\r{1}
    \tdplotsetmaincoords{80}{90}
  \begin{tikzpicture}[scale=1.2,line join=bevel, tdplot_main_coords]
    \coordinate (O) at (0,0,0);

\coordinate (A) at ({sqrt(8/9)},0,{-1/3});
\coordinate (B) at ({-sqrt(2/9)},{sqrt(2/3)},{-1/3});
\coordinate (C) at ({-sqrt(2/9)},{-sqrt(2/3)},{-1/3});
\coordinate (D) at (0,0,1);

\begin{scope}[thick]
    \draw (A) -- (D)--(B);
    \draw (A) -- (D)--(C);
    \draw (A)--(B)--(D);
    \draw (A)--(C)--(D);
\end{scope}

\draw[thick,fill=green, opacity=0.2] (A) -- (D)--(B);
\draw[thick,fill=green, opacity=0.2] (A) -- (C) -- (D);

\begin{scope}[dashed] %thick?
    \draw (B) -- (C);
\end{scope}

\begin{scope}[opacity=0.8]
\draw[tdplot_screen_coords] (0,0,0) circle (\r);
\tdplotCsDrawLatCircle{\r}{-19.4712206345}
%\tdplotCsDrawLonCircle{\r}{0}
%\tdplotCsDrawGreatCircle{1}{90}{90}
\end{scope} 

%\filldraw[black] (0,0,0) circle (0.25pt) node[anchor=east] {$o$};
\filldraw[black] (0,0,1) circle (0.25pt) node[anchor=south] {};
\filldraw[black] (0,0,{-1/3}) circle (0.25pt) node[anchor=north] {};
\filldraw[black] (1,0,{-1/3}) circle (0.25pt) node[anchor=north west] {};
 \coordinate (L1) at (0,0,-1);
    \node[yshift=-7mm] at (L1) {$K=4$};
\end{tikzpicture}
    %%%%%%%%%%%%%%%%%%%%%%%
\tdplotsetmaincoords{80}{85}
\def\r{1}
  \begin{tikzpicture}[scale=1.2,line join=bevel, tdplot_main_coords]
    \coordinate (O) at (0,0,0);

\coordinate (A) at (1,0,0);
\coordinate (B) at ({-1/2},{sqrt(3)/2},0);
\coordinate (C) at ({-1/2},{-sqrt(3)/2},0);
\coordinate (D) at (0,0,1);
\coordinate (E) at (0,0,{-1});

\begin{scope}[thick]
    \draw (A) -- (D)--(B);
    \draw (A) -- (D) -- (C);
    \draw (A) -- (B)--(E);
    \draw (A)--(C)--(E);
    \draw (A)--(E)--(B);
\end{scope}

\draw[thick,fill=green, opacity=0.2] (A) -- (D)--(B);
\draw[thick,fill=green, opacity=0.2] (A) -- (D) -- (C);
\draw[thick,fill=green,opacity=0.2](A) -- (C) -- (E);  
\draw[thick,fill=green,opacity=0.2] (A)--(E)--(B);  

\begin{scope}[dashed] %thick?
    \draw (C) -- (B);
\end{scope}

\begin{scope}[opacity=0.8]
\draw[tdplot_screen_coords] (0,0,0) circle (\r);
\tdplotCsDrawLatCircle{\r}{0}
%\tdplotCsDrawLonCircle{\r}{0}
%\tdplotCsDrawGreatCircle{1}{90}{90}
\end{scope} 

%\filldraw[black] (0,0,0) circle (0.25pt) node[anchor=east] {$\mathbf{0}$};
\filldraw[black] (0,0,1) circle (0.25pt) node[anchor=south] {};
\filldraw[black] (0,0,-1) circle (0.25pt) node[anchor=north] {};
\filldraw[black] (1,0,0) circle (0.25pt) node[anchor=north west] {};
\filldraw[black] (B) circle (0.25pt) node[anchor=south] {};
\filldraw[black] (C) circle (0.25pt) node[anchor=south] {};
 \coordinate (L1) at (0,0,-1);
    \node[yshift=-7mm] at (L1) {$K=5$};
  \end{tikzpicture}
  %%%%%%%%%%%%%%%%%%%%
  \tdplotsetmaincoords{80}{80}
  \begin{tikzpicture}[scale=1.2,line join=bevel, tdplot_main_coords]
    \coordinate (O) at (0,0,0);

\coordinate (A) at (1,0,0);
\coordinate (B) at (0,1,0);
\coordinate (C) at ({-1},0,0);
\coordinate (D) at (0,{-1},0);
\coordinate (E) at (0,0,1);
\coordinate (F) at (0,0,{-1});
);

\begin{scope}[thick]
    \draw (B)--(E);
    \draw (B)--(F);
    \draw (D)--(E);
    \draw (D)--(F);
    \draw (A)--(E);
    \draw (A)--(F);
    \draw (B)--(A)--(D);
\end{scope}

\draw[thick,fill=green, opacity=0.2] (B) -- (A)--(E);
\draw[thick,fill=green, opacity=0.2] (A) -- (D) -- (E);
\draw[thick,fill=green,opacity=0.2](B) -- (A) -- (F);  
\draw[thick,fill=green,opacity=0.2] (A)--(D)--(F);  

\begin{scope}[dashed] %thick?
    \draw (C) -- (E);
    \draw (C)--(F);
    \draw (D)--(C);
    \draw (B)--(C);
\end{scope}

\begin{scope}[opacity=0.8]
\draw[tdplot_screen_coords] (0,0,0) circle (\r);
\tdplotCsDrawLatCircle{\r}{0}
%\tdplotCsDrawLonCircle{\r}{0}
%\tdplotCsDrawGreatCircle{1}{90}{90}
\end{scope} 

%\filldraw[black] (0,0,0) circle (0.25pt) node[anchor=east] {$o$};
\filldraw[black] (0,0,1) circle (0.25pt) node[anchor=south] {};
\filldraw[black] (0,0,-1) circle (0.25pt) node[anchor=north] {};
\filldraw[black] (1,0,0) circle (0.25pt) node[anchor=north west] {};
\filldraw[black] (B) circle (0.25pt) node[anchor=north] {};
\filldraw[black] (C) circle (0.25pt) node[anchor=south] {};
\filldraw[black] (F) circle (0.25pt) node[anchor=south] {};
%\filldraw[black] (G) circle (0.25pt) node[anchor=north] {};
 \coordinate (L1) at (0,0,-1);
    \node[yshift=-7mm] at (L1) {$K=6$};
  \end{tikzpicture}
  %%%%%%%%%%%%%%%%%%%%%%
  \tdplotsetmaincoords{80}{90}
  \begin{tikzpicture}[scale=1.2,line join=bevel, tdplot_main_coords]
    \coordinate (O) at (0,0,0);

\coordinate (A) at (0,0,1);
\coordinate (B) at (0.894427191,0,0.4472135955);
\coordinate (C) at (0.2763932023,0.8506508084,{1/sqrt(5)});
\coordinate (D) at ({-0.7236067977},0.5257311121,{1/sqrt(5});
\coordinate (E) at ({-0.7236067977},{-0.5257311121},{1/sqrt(5)});
\coordinate (F) at (0.2763932022,{-0.8506508084},{1/sqrt(5)});
\coordinate (G) at (0.7236067977,0.5257311121,{-1/sqrt(5)});
\coordinate (H) at ({-0.2763932023},0.8506508084,{-1/sqrt(5)});
\coordinate (I) at ({-0.894427191},0,{-1/sqrt(5)});
\coordinate (J) at ({-0.2763932022},{-0.8506508084},{-1/sqrt(5)});
\coordinate (K) at (0.7236067977,{-0.5257311121},{-1/sqrt(5)});
\coordinate (L) at (0,0,-1);

\begin{scope}[thick]
    \draw (A) -- (F) -- (B);
    \draw (A) -- (B) -- (C);
    \draw (L) -- (K) -- (B);
    \draw (L) -- (G) -- (B);
    \draw (J) -- (F) -- (K);
    \draw (H) -- (C) -- (G);
    \draw (J) -- (K) -- (G) -- (H);
    \draw (A) -- (C);
    \draw (H) -- (L) -- (J);
\end{scope}

\draw[thick,fill=green,opacity=0.2] (A)--(F)--(B);
\draw[thick,fill=green,opacity=0.2] (A)--(C)--(B);
\draw[thick,fill=green,opacity=0.2] (J)--(K)--(F);  
\draw[thick,fill=green,opacity=0.2] (F)--(K)--(B);
\draw[thick,fill=green,opacity=0.2] (B)--(K)--(G);
\draw[thick,fill=green,opacity=0.2] (B)--(G)--(C);
\draw[thick,fill=green,opacity=0.2] (C)--(H)--(G);
\draw[thick,fill=green,opacity=0.2] (L)--(K)--(G);
\draw[thick,fill=green,opacity=0.2] (L)--(K)--(J);
\draw[thick,fill=green,opacity=0.2] (L)--(G)--(H);

\begin{scope}[dashed] %thick?
    \draw (F) -- (E) -- (D) -- (C);
    \draw (E) -- (A) -- (D);
    \draw (J) -- (I);
    \draw (E) -- (I) -- (D);
    \draw (L) -- (I);
    \draw (I) -- (H);
    \draw (H) -- (D);
    \draw (J) -- (E);
\end{scope}

\begin{scope}[opacity=0.8]
\draw[tdplot_screen_coords] (0,0,0) circle (\r);
\tdplotCsDrawLatCircle{\r}{26.5650512}
\tdplotCsDrawLatCircle{\r}{-26.5650512}
%\tdplotCsDrawLonCircle{\r}{0}
%\tdplotCsDrawGreatCircle{1}{90}{90}
\end{scope} 

%\filldraw[black] (0,0,0) circle (0.25pt) node[anchor=east] {$o$};
\filldraw[black] (0,0,1) circle (0.25pt) node[anchor=south] {};
\filldraw[black] (0,0,-1) circle (0.25pt) node[anchor=north] {};
\filldraw[black] (1,0,0) circle (0.25pt) node[anchor=north west] {};
\filldraw[black] (B) circle (0.25pt) node[anchor=north] {};
\filldraw[black] (C) circle (0.25pt) node[anchor=south] {};
\filldraw[black] (F) circle (0.25pt) node[anchor=south]
{};
\filldraw[black] (G) circle (0.25pt) node[anchor=north]
{};
 \coordinate (L1) at (0,0,-1);
    \node[yshift=-7mm] at (L1) {$K=12$};
  \end{tikzpicture}
     \end{center}
    
  %%%%%%%%%%%%%%%%%%%
    In our main results, we determine the surface area maximizers in $\mathcal{M}_7$ and $\mathcal{M}_8$.
    
    \begin{theorem}\label{7vertices}
        Let $P\in\mathcal{M}_7$. Then
$S(P) \leq \frac{5}{4}\sqrt{50-6\sqrt{5}}=7.560546\ldots$ 
with equality if and only if $P$ is a pentagonal bipyramid with two vertices at the poles $\pm e_3$ and the other five forming an equilateral pentagon in the equator $\mathbb{S}^2\cap (\spann(e_3))^\perp$.
    \end{theorem}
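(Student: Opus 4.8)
\emph{Reduction.} Let $P\in\mathcal M_7$. Then $P$ is simplicial, so Euler's formula gives $15$ edges and $10$ triangular facets, all mutually congruent. If the common facet were equilateral, $P$ would be a convex deltahedron with $10$ faces, i.e.\ the pentagonal bipyramid with equilateral faces; a direct computation shows this polytope has no circumscribed sphere, so this case is vacuous and the common facet is an isosceles triangle $T$ with two legs of length $a$ and base of length $b\neq a$. Every edge of $P$ therefore has length $a$ (a \emph{leg}, in each of its two facets) or $b$ (a \emph{base}); counting facet--edge incidences gives exactly $10$ legs and $5$ bases, and around each vertex $v$ the neighbors split into the $\ell_v$ leg-neighbors, all at distance $a$ and hence lying on one circle $\mathbb S^2\cap\partial B(v,a)$, and the $\beta_v$ base-neighbors, at distance $b$. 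Since all ten facets are congruent, $S(P)=10\cdot\mathrm{area}(T)=\tfrac{10}{4}\,b\sqrt{4a^2-b^2}$, and because $T$ is inscribed in a circle of radius $\le 1$ one has $b^2\le 4a^2-a^4$. (Equivalently, all facet planes are at the same distance from the center, so $P$ has a concentric insphere.)

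\emph{The bipyramid via ``tips''.} Call $v$ a \emph{tip} if $v$ is the apex (the vertex where the two legs meet) of \emph{every} facet containing it, equivalently $\beta_v=0$. If $v$ is a tip of degree $m$, its $m$ neighbors all lie at distance $a$ from $v$, hence on one circle $C$; the link of $v$ is a convex $m$-cycle inscribed in $C$ with all sides equal to $b$, hence a regular $m$-gon; and $v$, lying on the axis of $C$, which passes through the center of $\mathbb S^2$, must be one of the two poles of that axis. Writing $e$ for the signed height of $C$ when $v=\pm e_3$, this gives $a^2=2(1-e)$ and $b=a\sqrt{4-a^2}\,\sin(\pi/m)$. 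A degree-$6$ tip is impossible, since its link would be a planar hexagon forcing an eighth vertex; and using the classification of simplicial $3$-polytopes on $7$ vertices one checks that a degree-$5$ tip forces $P$ to be a pentagonal bipyramid whose fifth ``equatorial'' structure is a second tip. Hence if $P$ has two tips they both have degree $5$, their links coincide (the common $5$-cycle), the tips are the antipodal poles of its axis, and equality of the two leg-lengths, $2(1-e)=2(1+e)$, forces $e=0$. This is precisely the polytope of the theorem, with $a=\sqrt2$, $b=2\sin(\pi/5)$ and $S(P)=10\sin(\pi/5)\sqrt{2-\sin^2(\pi/5)}=\tfrac{5}{4}\sqrt{50-6\sqrt5}$.

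\emph{The remaining types, and the main obstacle.} It remains to show that any other $P\in\mathcal M_7$ either does not exist or has strictly smaller surface area. Among the five combinatorial types of simplicial $3$-polytopes on $7$ vertices, the ones not covered above have either a single tip of degree $3$ or $4$, or no tip. A degree-$3$ or degree-$4$ tip is eliminated as follows: its link vertices lie on the tip's circle $C$, so any other vertex $w$ adjacent to at least three of them is forced onto the axis of $C$ (three points determine $C$), hence $w$ is the opposite pole; but the facet structure then produces two such $w$, a contradiction. Equivalently, in these types some facet spanned by non-link vertices is forced to have all three sides equal to $b$, contradicting $a\neq b$ — this is how the augmented octahedron is ruled out. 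For the no-tip types, the identity $\sum_v(\deg v-k_v)=20$, where $k_v$ is the number of facets with apex $v$, together with $\deg v\equiv k_v\pmod 2$ and $1\le k_v\le\deg v-2$, forces each summand to lie in $\{2,4\}$ and hence exactly three vertices to have degree $\ge 5$; the few resulting degree sequences are either non-realizable or are killed by the same forced-equilateral-facet / three-points-on-a-circle arguments, after which at most a one- or two-parameter family survives and an explicit estimate gives $S(P)<\tfrac54\sqrt{50-6\sqrt5}$. This last analysis is where the real difficulty lies: the degree-$3$-tip configurations are genuinely ``large'' — the tip relation alone permits $S$ as large as $\tfrac{10}{4}b\sqrt{4a^2-b^2}$ evaluated at $a=\sqrt2$, namely $\tfrac52\sqrt{15}\approx 9.68$ — so one must actually prove that no such triangle can close up into an inscribed $7$-vertex polytope with all ten facets congruent; the bookkeeping for the no-tip types and the verification of strict inequality in the borderline surviving family are the core of the argument.
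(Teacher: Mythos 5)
Your overall strategy---classify the simplicial combinatorial types on $7$ vertices, show the congruent-isosceles structure kills everything except the pentagonal bipyramid, then optimize within that type---is the same as the paper's, and your computation of the extremal value ($a=\sqrt2$, $b=2\sin(\pi/5)$, $S=\tfrac54\sqrt{50-6\sqrt5}$) is correct. But there is a genuine gap exactly where you say ``this last analysis is where the real difficulty lies'': the elimination of the four non-bipyramid combinatorial types, and of the non-symmetric labellings of the bipyramid type (e.g.\ a degree-$4$ equatorial tip, or a no-tip assignment of apices), is only sketched. The sentence asserting that the remaining degree sequences ``are either non-realizable or are killed by the same arguments, after which at most a one- or two-parameter family survives and an explicit estimate gives $S(P)<\tfrac54\sqrt{50-6\sqrt5}$'' states the conclusion without performing the case analysis, and you concede as much. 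The paper closes this by observing that each of the four remaining Britton--Dunitz types contains a degree-$3$ vertex, so Property $\mathscr{L}$ (Corollary 2.3) forces all three edges there to be legs; this produces three identically colored length-two paths between a pair of vertices, hence an antipodal pair (Lemma 2.1) and a degenerate diameter configuration (Defect A). Only one coloring of one class survives, and it is killed by the same antipodality; the final optimization over pentagonal bipyramids is then quoted from \cite[Cor.~2]{DHL}.

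One concrete error inside the sketched part: you claim that a vertex $w$ adjacent to at least three link vertices of a tip is ``forced onto the axis of $C$ (three points determine $C$).'' Adjacency alone does not give this---you need $w$ to be \emph{equidistant} from the three points, i.e.\ the three connecting edges must all have the same length (color). This is precisely the hypothesis of the paper's Lemma 2.1 (``three distinct identically colored length two paths''), and without it the step fails; showing that the colors can always be arranged to be identical is exactly the combinatorial bookkeeping that your proposal omits.
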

We conjecture that the pentagonal bipyramid stated in Theorem \ref{7vertices} is in fact the \emph{global} surface area maximizer in $\mathcal{I}_7$ (see also \cite[Sec. 7]{Hoehner-Ledford-2022}).
\begin{conjecture}
       Let $P\in\mathcal{I}_7$. Then
$S(P) \leq \frac{5}{4}\sqrt{50-6\sqrt{5}}=7.560546\ldots$ 
with equality if and only if $P$ is a pentagonal bipyramid with two vertices at the poles $\pm e_3$ and the other five forming an equilateral pentagon in the equator $\mathbb{S}^2\cap (\spann(e_3))^\perp$.
\end{conjecture}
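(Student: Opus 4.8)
The plan is to treat the conjecture as an extension of Theorem~\ref{7vertices}: first carry out the standard reduction to simplicial polytopes, then recall how the equifacetal case is settled, and finally outline the variational attack on the general case and identify where the real difficulty lies. Unlike Theorem~\ref{7vertices}, the conjecture is open, so what follows is a research program rather than a complete proof.

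\textbf{Reduction and bookkeeping.} A surface-area maximizer $P$ over $\mathcal{I}_7$ exists by compactness, and a standard reduction (as in the solved cases) lets us assume $P$ is simplicial: any non-triangular facet is planar, and perturbing its vertices to break it into triangles does not decrease $S$. Then $2E=3F$ together with Euler's relation $V-E+F=2$ forces $F=10$ and $E=15$, so $S(P)=\sum_{i=1}^{10}A_i$ with $A_i$ the area of the $i$-th facet. If in addition all facets are congruent isosceles triangles with leg $\ell$ and base $b$, then $S(P)=10\cdot\tfrac b4\sqrt{4\ell^2-b^2}$, a function of $(\ell,b)$ alone; and since each facet is inscribed in the circle $\mathbb{S}^2\cap\operatorname{aff}(f)$, whose radius is at most $1$ and equals the facet's circumradius $\ell^2/\sqrt{4\ell^2-b^2}$, one has the admissibility constraint $b^2\le\ell^2(4-\ell^2)$.

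\textbf{The equifacetal case (i.e.\ Theorem~\ref{7vertices}).} Use the combinatorial classification: there are (classically) exactly five combinatorial types of simplicial $3$-polytope on seven vertices, one being the pentagonal bipyramid. In the proper-isosceles case $\ell\neq b$, gluing two congruent facets along an edge forces that edge to have length $\ell$ or $b$, and each facet has exactly one $b$-edge; counting incidences, $P$ has $5$ edges of length $b$ and $10$ of length $\ell$. At a vertex $v$, writing $\beta(v),\lambda(v)$ for the numbers of incident $b$- and $\ell$-edges and $a(v)$ for the number of facets at $v$ in which $v$ is the apex, a local count yields $a(v)=\lambda(v)-\beta(v)\ge 0$, with $\sum_v a(v)=10=\sum_v\beta(v)$, while convexity gives $a(v)\alpha+(d(v)-a(v))\beta<2\pi$ for the facet's apex and base angles $\alpha,\beta$. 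Running through the five types and the finitely many admissible placements of the five $b$-edges, one shows the maximum of $10\cdot\tfrac b4\sqrt{4\ell^2-b^2}$ over the realizable families is attained only by the pentagonal bipyramid whose five $b$-edges form the ``equatorial'' $5$-cycle: there the equalities $|Nv_i|=|Nv_{i+1}|$ and $|Sv_i|=|Sv_{i+1}|$ place the five degree-$4$ vertices on a common circle of $\mathbb{S}^2$ as a regular pentagon with the degree-$5$ vertices $N,S$ as its poles, and $|Nv_i|=|Sv_i|$ then pushes that pentagon onto the equator, pinning $\ell=\sqrt2$ and $b=2\sin(\pi/5)$, so that $S(P)=10\sin(\pi/5)\sqrt{2-\sin^2(\pi/5)}=\tfrac54\sqrt{50-6\sqrt5}$. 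The equilateral case $\ell=b$ is empty, since the only $7$-vertex polytope with all facets congruent equilateral triangles is the equilateral pentagonal bipyramid, whose vertices do not lie on a common sphere.

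\textbf{The general conjecture and the main obstacle.} To remove the equifacetality hypothesis, the natural route is a first-order analysis: at a maximizer, for each vertex $v$ the vector $\sum_{f\ni v}\nabla_v A_f$ --- where, for a facet $f=vxy$, $\nabla_v A_f$ lies in $\operatorname{aff}(f)$, is orthogonal to $xy$, points toward $v$, and has length $\tfrac12|xy|$ --- must be parallel to $v$; one solves this system for each of the five combinatorial types, checks second-order conditions, and hopes that only the equatorial pentagonal bipyramid survives. A cleaner intermediate target would be to prove that \emph{any} maximizer over $\mathcal{I}_7$ is equifacetal with isosceles (possibly equilateral) facets, i.e.\ lies in $\mathcal{M}_7$ --- the conjecture would then follow from Theorem~\ref{7vertices}. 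This ``a maximizer is equifacetal'' phenomenon holds in every solved case ($K=4,5,6,12$), but no general mechanism forcing it is known; supplying one --- or, failing that, carrying out the full five-type critical-point classification with its considerable casework --- is precisely the obstacle that keeps the conjecture open.
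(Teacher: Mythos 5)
The statement you were asked to prove is stated in the paper as a \emph{conjecture}: the paper itself offers no proof of it, only the weaker Theorem \ref{7vertices}, which restricts the competition from $\mathcal{I}_7$ to the equifacetal class $\mathcal{M}_7$. Your proposal correctly recognizes this and is honest that what you give is a research program, not a proof. So the assessment splits in two. For the part you do argue --- the equifacetal case --- your sketch is sound and your numbers check out ($\ell=\sqrt2$, $b=2\sin(\pi/5)$, $10\sin(\pi/5)\sqrt{2-\sin^2(\pi/5)}=\tfrac54\sqrt{50-6\sqrt5}$), and it is in substance the same argument as the paper's: both rest on the Britton--Dunitz classification into five combinatorial types and a finite combinatorial elimination. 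The difference is mechanism: you count $b$-edges ($5$ of them vs.\ $10$ $\ell$-edges) and impose local apex/angle constraints at each vertex, whereas the paper encodes the edge-length dichotomy as a $2$-coloring of $G(P)$ and kills the four non-bipyramidal types via its ``Defect A'' lemma (three identically colored $2$-paths force antipodal endpoints, hence a diameter edge, hence degeneracy), before invoking the known optimal bipyramid from the literature. Your route would need the ``finitely many admissible placements of the five $b$-edges'' step carried out explicitly to be complete; the paper's coloring defects are precisely the tool that makes that enumeration tractable.

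For the conjecture itself there is a genuine gap, which you name but do not close: nothing in your proposal (or in the paper) rules out a maximizer of $S$ over $\mathcal{I}_7$ whose ten facets are \emph{not} congruent. The first-order condition you write down --- that $\sum_{f\ni v}\nabla_v A_f$ be parallel to $v$ at each vertex --- is the right starting point, but solving that system across all five combinatorial types, handling degenerations between types, and verifying that only the equatorial pentagonal bipyramid survives is exactly the unexecuted work; and your proposed shortcut (``every maximizer is equifacetal'') is itself an open assertion with no known general mechanism behind it, as you acknowledge. So the proposal should be read as a correct framing of why the statement is open and how it reduces to Theorem \ref{7vertices} plus an unproven rigidity step, not as a proof.
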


Regarding the eight vertex problem, we have the following
    
    \begin{theorem}\label{mainThm}
 Let $P\in\mathcal{M}_8$. Then $S(P)\leq 8$ with equality if and only if $P$ has vertices at $\pm e_3$, $(\tfrac{\sqrt{8}}{3},0,\tfrac{1}{3}), (-\tfrac{\sqrt{2}}{3},\tfrac{\sqrt{6}}{3},\tfrac{1}{3}), (-\tfrac{\sqrt{2}}{3},-\tfrac{\sqrt{6}}{3},\tfrac{1}{3}), (\tfrac{\sqrt{2}}{3},\tfrac{\sqrt{6}}{3},-\tfrac{1}{3}), (-\tfrac{\sqrt{8}}{3},0,-\tfrac{1}{3})$ and $(\tfrac{\sqrt{2}}{3}, -\tfrac{\sqrt{6}}{3}, -\tfrac{1}{3})$.
    \end{theorem}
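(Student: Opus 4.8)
The plan is to convert the surface-area problem into a facet-area problem and then carry out a finite case analysis over combinatorial types. If $P\in\mathcal{M}_8$, then $P$ is a simplicial $3$-polytope with $8$ vertices, hence with $12$ facets and $18$ edges by Euler's formula, and because all $12$ facets are congruent triangles of common area $A$ we have $S(P)=12A$; thus it suffices to maximize the common facet area. Two observations tighten the picture: since the three vertices of any facet lie on $\mathbb{S}^2$, the foot of the perpendicular from the origin to a facet plane is the circumcenter of that facet, so all facets share a common circumradius $r$ and common distance $h=\sqrt{1-r^2}$ from the origin (equivalently $S(P)=3\,\vol(P)/h$); and writing the common facet as an isosceles triangle with legs of length $a$ and base of length $b$ (with $a=b$ in the equilateral case) gives $A=\tfrac{b}{4}\sqrt{4a^2-b^2}$. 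So the whole problem becomes: over which pairs $(a,b)$ does there exist an $8$-vertex polytope inscribed in $\mathbb{S}^2$ all of whose facets are congruent to the $(a,a,b)$-triangle, and which such pair maximizes $A$?

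Next I would pin down the combinatorial structure. In the strictly isosceles case $a\neq b$, every edge of $P$ has a well-defined length in $\{a,b\}$ (it is a leg in both incident facets, or a base in both), so the edges split into a leg-graph with $12$ edges and a base-graph with $6$ edges, with each facet containing exactly one base-edge; dually, the base-edges form a perfect matching of the cubic planar dual graph on $12$ vertices. Combining this with the local constraint $2s_v+r_v=\deg(v)$ at each vertex $v$ (where $s_v$ is the number of base-edges at $v$ and $r_v$ the number of base-edges lying opposite $v$ in the facets at $v$) and with the fact that there are only $14$ combinatorial types of simplicial $3$-polytopes on $8$ vertices, one should be left with only a handful of admissible pairs (polytope, colouring) — notably the hexagonal bipyramid and the gyroelongated triangular bipyramid (a triangular antiprism capped by a pyramid on each of its two triangular faces), the latter being the type of the claimed maximizer.

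For each surviving combinatorial type I would then show the conditions ``facets congruent'' and ``inscribed in $\mathbb{S}^2$'' leave only finitely many realizations (often a single one up to isometry), found by solving the resulting polynomial system, and evaluate $S=12A$ at each. For the gyroelongated triangular bipyramid the base-edges are forced to be the six edges of the two triangular layers, so each layer is an equilateral triangle of side $b$; the two capping apexes, being at leg-distance $a$ from the three vertices of their layer, must sit at a pole of that layer's circumcircle, and a short computation then forces the two layers onto a common axis at heights $\pm t$ with a $60^\circ$ relative rotation and $3t^2+2t-1=0$, i.e. $t=\tfrac13$, $a=\tfrac{2}{\sqrt3}$, $b=\sqrt{8/3}$, $A=\tfrac23$, $S=8$ --- exactly the configuration in the statement. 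The hexagonal bipyramid is similarly forced to be the regular hexagonal bipyramid with equatorial hexagon, giving $A=\tfrac{\sqrt7}{4}$ and $S=3\sqrt7=7.937\ldots<8$, and any remaining types are dispatched the same way. Finally the equilateral case $a=b$ is handled separately: the only convex deltahedron on $8$ vertices is the snub disphenoid, which must be shown not to be inscribable in a sphere (so that it contributes no polytope at all). Comparing the finitely many resulting values of $S(P)$ yields $S(P)\le 8$ with equality only for the stated polytope.

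The main obstacle is the combinatorial side together with the rigidity claims: ruling out the wrong combinatorial types and the wrong colourings, showing that no ``skew'' realization of an admissible type (layers not concentric, apexes off-axis) exists or can beat $8$, and establishing that the snub disphenoid is genuinely not inscribable --- this last point matters because a careless area estimate suggests an inscribed snub disphenoid would have surface area above $8$. Once the admissible realizations have been enumerated, computing and comparing their (common) facet areas is routine.
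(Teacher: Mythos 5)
Your overall strategy is the same as the paper's: reduce to the $14$ combinatorial types of simplicial $3$-polytopes on $8$ vertices, use the isosceles structure (your ``base-edges form a perfect matching of the dual'' is exactly the paper's $2$-colouring in which every facet has one red edge), eliminate most (type, colouring) pairs, compute the survivors, and treat the equilateral case via the snub disphenoid. Your endgame computation for the winner is correct and matches the paper's: the antiprism-plus-two-apexes type with layers at heights $\pm\tfrac13$, legs $2/\sqrt3$, bases $\sqrt{8/3}$, facet area $\tfrac23$, $S=8$; and the hexagonal bipyramid does give $3\sqrt7$.

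The genuine gap is in the middle, which is where essentially all of the work lies. First, the elimination is only asserted (``one should be left with only a handful of admissible pairs''), and your stated outcome of it is wrong: it is not true that only the hexagonal bipyramid and the gyroelongated triangular bipyramid survive the combinatorial and colouring filters. The paper must individually dispatch several further surviving colourings by ad hoc geometric arguments --- its Classes $8$ and $12$ (which pass the colouring tests but fail for metric/convexity reasons), and three distinct colourings of its Class $14$, two of which are actually \emph{realizable} as inscribed equifacetal polytopes with surface areas $3\sqrt7$ and $4\sqrt2$ respectively. So the final comparison genuinely ranges over more configurations than your two, and ``any remaining types are dispatched the same way'' conceals arguments of a different character (coplanarity/convexity obstructions, not just solving for a height parameter). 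Second, the non-inscribability of the snub disphenoid, which you rightly flag as essential, needs an actual proof; the paper gives one by exhibiting the canonical coordinates and checking that the two vertex orbits lie at different distances from the centroid, so no sphere through all eight vertices exists. Without these two pieces your argument establishes only that the stated polytope attains $S=8$, not that $8$ is the maximum over $\mathcal{M}_8$.
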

    
    \begin{corollary}
    The global surface area maximizer in $\mathcal{I}_8$ is not a member of  $\mathcal{M}_8$.
\end{corollary}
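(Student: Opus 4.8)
The plan is to read the corollary as the quantitative complement of Theorem~\ref{mainThm}: that theorem shows $S(P)\le 8$ for every $P\in\mathcal{M}_8$ (with $8$ attained), so it suffices to exhibit a single polytope $Q\in\mathcal{I}_8$ with $S(Q)>8$. Once such a $Q$ is in hand, any global surface area maximizer $P_0$ of $S$ over $\mathcal{I}_8$ satisfies $S(P_0)\ge S(Q)>8\ge S(P)$ for all $P\in\mathcal{M}_8$, which forces $P_0\notin\mathcal{M}_8$. It is worth noting here why the comparison polytope must \emph{strictly} beat $8$: the cube inscribed in $\mathbb{S}^2$ already has surface area exactly $8$, and Theorem~\ref{mainThm} itself produces a member of $\mathcal{M}_8$ of surface area $8$, so nothing with surface area equal to $8$ would settle the corollary.

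For $Q$ I would take a square antiprism inscribed in $\mathbb{S}^2$, say the one with its two square faces in the planes $z=\pm\tfrac12$: the eight vertices $\bigl(\tfrac{\sqrt3}{2}\cos\tfrac{k\pi}{2},\,\tfrac{\sqrt3}{2}\sin\tfrac{k\pi}{2},\,\tfrac12\bigr)$ and $\bigl(\tfrac{\sqrt3}{2}\cos(\tfrac{\pi}{4}+\tfrac{k\pi}{2}),\,\tfrac{\sqrt3}{2}\sin(\tfrac{\pi}{4}+\tfrac{k\pi}{2}),\,-\tfrac12\bigr)$, $k=0,1,2,3$, are distinct, lie on $\mathbb{S}^2$, and each is a vertex of the convex hull, so $Q\in\mathcal{I}_8$. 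Its boundary consists of two congruent square facets of side $\sqrt{3/2}$ and eight congruent isosceles triangular facets, each with base $\sqrt{3/2}$ and two lateral edges of squared length $\tfrac34(2-\sqrt2)+1$. Summing these ten facet areas with the elementary isosceles-triangle formula gives
\[
S(Q)=3+\sqrt{51-18\sqrt{2}}=8.0541\ldots>8,
\]
which is all that is required. (Optimizing the common height $\tfrac12$ of the square faces would push this up to roughly $8.12$, but one value above $8$ is enough.)

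There is no genuinely hard step here: the combinatorics of the square antiprism is classical and its facet areas are elementary, so the computation of $S(Q)$ is routine, and the comparison with Theorem~\ref{mainThm} is immediate. The only point I would take care to state, briefly, is why a global surface area maximizer in $\mathcal{I}_8$ exists at all: $S$ extends continuously to the compact set $(\mathbb{S}^2)^8$, and a maximizing $8$-tuple cannot have coincident points, since from any inscribed polytope with fewer than $8$ vertices one may adjoin a new vertex of $\mathbb{S}^2$ lying beyond a triangular facet and strictly increase the surface area. Hence a maximizer is a genuine $8$-vertex inscribed polytope, and by the argument above it lies outside $\mathcal{M}_8$.
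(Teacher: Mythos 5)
Your proof is correct and follows essentially the same route as the paper: exhibit a single polytope in $\mathcal{I}_8$ whose surface area exceeds $8$ and invoke Theorem \ref{mainThm}. The only difference is that the paper cites coordinates from \cite{Hoehner-Ledford-2022} giving surface area about $8.11978$, whereas you construct an explicit inscribed square antiprism with $S(Q)=3+\sqrt{51-18\sqrt{2}}\approx 8.054$ (your computation checks out) and, as a small bonus, justify the existence of a global maximizer, which the paper leaves implicit.
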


\begin{proof}
In \cite{Hoehner-Ledford-2022}, the coordinates of eight points on the sphere were given  which yield a polytope in $\mathcal{I}_8$ with surface area approximately equal to $8.11978$. By Theorem \ref{mainThm}, this is larger than the  surface area of any polytope in $\mathcal{M}_8$.
\end{proof}
%%%%%%%%%%%%%%%%%%%%%%%%%%
\begin{center}
\tdplotsetmaincoords{80}{90}
\def\r{1}
  \begin{tikzpicture}[scale=2.1,line join=bevel, tdplot_main_coords]
    \coordinate (O) at (0,0,0);

\coordinate (A) at (1,0,0);
\coordinate (B) at ({(-1+sqrt(5))/4},{sqrt((5+sqrt(5))/8)},0);
\coordinate (C) at ({(-1-sqrt(5))/4},{sqrt((5-sqrt(5))/8)},0);
\coordinate (D) at (0,0,1);
\coordinate (E) at (0,0,{-1});
\coordinate (F) at ({(-1-sqrt(5))/4},{-sqrt((5-sqrt(5))/8)},0);
\coordinate (G) at ({(-1+sqrt(5))/4},-{sqrt((5+sqrt(5))/8)},0);

\begin{scope}[thick]
    \draw (A) -- (D)--(B);
    \draw (A) -- (B)--(E);
    \draw (G)--(A)--(E);
    \draw (D)--(G);
    \draw (G)--(E);
\end{scope}

\draw[thick,fill=blue, opacity=0.2] (A) -- (D)--(B);
\draw[thick,fill=blue, opacity=0.2] (A) -- (D) -- (G);
\draw[thick,fill=blue,opacity=0.2](A) -- (G) -- (E);  
\draw[thick,fill=blue,opacity=0.2] (A)--(E)--(B);  

\begin{scope}[dashed] %thick?
    \draw (C) -- (B);
    \draw (D)--(C);
    \draw (D)--(F);
    \draw (C)--(F);
    \draw (F)--(G);
    \draw (E)--(F);
    \draw (E)--(C);
\end{scope}

\begin{scope}[opacity=0.8]
\draw[tdplot_screen_coords] (0,0,0) circle (\r);
\tdplotCsDrawLatCircle{\r}{0}
%\tdplotCsDrawLonCircle{\r}{0}
%\tdplotCsDrawGreatCircle{1}{90}{90}
\end{scope} 

\filldraw[black] (0,0,0) circle (0.25pt) node[anchor=east] {\small $o$};
\filldraw[black] (0,0,1) circle (0.25pt) node[anchor=south] {$e_3$};
\filldraw[black] (0,0,-1) circle (0.25pt) node[anchor=north] {$-e_3$};
\filldraw[black] (1,0,0) circle (0.25pt) node[anchor=north west] {};
\filldraw[black] (B) circle (0.25pt) node[anchor=north] {};
\filldraw[black] (C) circle (0.25pt) node[anchor=south] {};
\filldraw[black] (F) circle (0.25pt) node[anchor=south]
{};
\filldraw[black] (G) circle (0.25pt) node[anchor=north]
{};
  \end{tikzpicture}
  \qquad
    \tdplotsetmaincoords{80}{90}
\def\r{1}
  \begin{tikzpicture}[scale=2.1,line join=bevel, tdplot_main_coords]
    \coordinate (O) at (0,0,0);

\coordinate (A1) at ({sqrt(8)/3},0,1/3);
\coordinate (B1) at ({(sqrt(8)/3)*cos(360/3)},{(sqrt(8)/3)*sin(360/3)},1/3);
\coordinate (C1) at ({(sqrt(8)/3)*cos(2*360/3)},{(sqrt(8)/3)*sin(2*360/3)},1/3);
\coordinate (D1) at (0,0,1);

\begin{scope}[thick]
    \draw (A1) -- (D1);
    \draw (D1)--(B1);
      \draw (D1)--(C1);
      \draw (B1)--(A1);
      \draw (A1)--(C1);
\end{scope} 

\begin{scope}[dashed] %thick?
    \draw (C1) -- (B1);
\end{scope}

\coordinate (E1) at ({(sqrt(8)/3)*cos(60)},{(sqrt(8)/3)*sin(60)},-1/3);
\coordinate (F1) at ({(sqrt(8)/3)*cos(180)},{(sqrt(8)/3)*sin(180)},-1/3);
\coordinate (G1) at ({(sqrt(8)/3)*cos(300)},{(sqrt(8)/3)*sin(300)},-1/3);
\coordinate (H1) at
(0,0,-1);

\begin{scope}[thick]
        \draw (E1)--(G1);
        \draw (E1)--(H1);
        \draw (G1)--(H1);
\end{scope} 

\begin{scope}[dashed] %thick?
    \draw (E1) -- (F1);
    \draw (F1)--(G1);
    \draw (F1)--(H1);
    \end{scope}

\begin{scope}[opacity=0.6]
\draw[tdplot_screen_coords] (0,0,0) circle (\r);
\tdplotCsDrawLatCircle{\r}{19.4712206345}
\tdplotCsDrawLatCircle{\r}{-19.4712206345}
%\tdplotCsDrawLonCircle{\r}{0}
%\tdplotCsDrawGreatCircle{1}{90}{90}
\end{scope} 

\begin{scope}[thick]
        \draw (A1)--(E1); %node[above,pos=0.6,xshift=3]{\footnotesize $\ell_1$};
        \draw (B1)--(E1); %node[below,pos=0.5,xshift=5]{\footnotesize $\ell_2$};
        \draw (A1)--(G1);
        \draw (C1)--(G1);
\end{scope} 

\begin{scope}[dashed] %thick?
    \draw (C1) -- (F1);
    \draw (B1)--(F1);
    \end{scope}
    
\draw[thick,fill=blue, opacity=0.2] (A1) -- (D1)--(B1);
\draw[thick,fill=blue, opacity=0.2] (A1) -- (D1)--(C1);
\draw[thick,fill=blue,opacity=0.2](A1) -- (G1) -- (E1);  
\draw[thick,fill=blue,opacity=0.2] (A1)--(E1)--(B1);  
\draw[thick,fill=blue, opacity=0.2] (A1) -- (G1)--(C1);
\draw[thick,fill=blue, opacity=0.2] (H1) -- (G1)--(E1);

\filldraw[black] (0,0,0) circle (0.25pt) node[anchor=east,xshift=0.5mm] {\small $o$};
\filldraw[black] (0,0,1) circle (0.25pt) node[anchor=south] {$e_3$};
\filldraw[black] (0,0,-1) circle (0.25pt) node[anchor=north] {$-e_3$};
\filldraw[black] (A1) circle (0.25pt) node[anchor=north west] {};
\filldraw[black] (B1) circle (0.25pt) node[anchor=north] {};
\filldraw[black] (C1) circle (0.25pt) node[anchor=south] {};
\filldraw[black] (G1) circle (0.25pt) node[anchor=south]
{};
\filldraw[black] (E1) circle (0.25pt) node[anchor=north]
{};
  \end{tikzpicture}
     \end{center}
     
  {\flushleft\footnotesize {\bf Figure 1}: The maximum surface area polytope in $\mathcal{M}_7$ is depicted on the left, and  the maximum surface area polytope in $\mathcal{M}_8$ is depicted on the right.}
   
%%%%%%%%%%%%%%%%%%%%%%%%%%%%%%%%%%%%%%
    
    \subsection{Comparison with the asymptotic best approximation}
    
    By Remark 2.1 in \cite{BHK},
\begin{equation*}
    \divv_2 (4\pi)^2 \leq \lim_{K\to\infty}K\min_{P\in\mathcal{I}_K}\left\{4\pi-S(P)\right\} \leq \dell_2(4\pi)^2
\end{equation*}
where $\divv_2=5/(18\sqrt{3})$ and $\dell_2=1/(2\sqrt{3})$ are the Dirichlet-Voronoi tiling number and Delone triangulation number in $\R^3$, respectively. See, for example, the works \cite{GruberIn, GruberOut,HK-DCG,zador1982asymptotic} for more background on these numbers, and asymptotic estimates for them as the dimension $n$ tends to infinity. Thus, up to an error of $O(K^{-2})$,
\begin{equation}\label{best-asymptotic}
4\pi\left(1-\frac{2\pi}{\sqrt{3}}K^{-1}\right) \lesssim \max_{P\in\mathcal{I}_K}S(P)  \lesssim 4\pi\left(1-\frac{10\pi}{9\sqrt{3}}K^{-1}\right)\quad \text{as} \quad K\to\infty,
\end{equation}
where $a_K\lesssim b_K$ means the sequence $(a_K)$ is less than or asymptotically equal to $(b_K)$. Let $a_K:=4\pi(1-\frac{2\pi}{\sqrt{3}}K^{-1})$ and $b_K:=4\pi(1-\frac{10\pi}{9\sqrt{3}}K^{-1})$. We compare the surface areas of the known maximizers with these asymptotic estimates in the  figure below.
    
    \begin{center}
 \begin{tikzpicture}
\begin{axis}[scale=1.3,legend style={at={(1,0.35)}, font=\footnotesize},
    axis lines = left,
    xlabel = \(K\),
    xtick={0,1,2,3,4,5,6,7,8,9,10,11,12,13},
   % ylabel = %{\(\displaystyle\max_{Q\in\mathcal{M}_{K}}S(Q)\)},
    restrict y to domain=0:13,
    ytick={0,1,2,3,4,5,6,7,8,9,10,11,12,13},
    ymin=0, ymax=14, xmin=0.000001, xmax=14
    ]
scatter/classes={%
    a={mark=o,draw=black}}]
%\addplot[scatter,only marks,%
 %   scatter src=explicit symbolic]%
%table[meta=label] {
%x y label
%4 4.61880215352 1
%5 5.80947501931 2
%6 6.92820323028 3
%7 7.56054645584 4
%8 8 5
%9 1 6
%12 9.57454138327 7
 %   };
    \addplot[color=black,mark=*,only marks] coordinates {
	(4,4.61880215352)
    (5,5.80947501931)
    (6,6.92820323028)
    (12, 9.57454138327)
};
    \addplot[color=green,mark=*,only marks] coordinates {
        (7, 7.56054645584)
	(8,8)
};
    \addplot [domain=0:13, samples=2, dashed] {12.5663706144} node[below,pos=0.5]{{\scriptsize $\displaystyle 4\pi=\lim_{K\to\infty}\max_{P\in\mathcal{I}_{K}}S(P)$}};
    \addplot[domain=0:13,samples=200,blue] {4*pi-40*pi^2/(9*sqrt(3)*x)};
    \addplot[domain=0:13,samples=200,red] {4*pi-8*pi^2/(sqrt(3)*x)};
    \legend{$\max_{P\in\mathcal{I}_K}S(P)$, $\max_{P\in\mathcal{M}_K}S(P)$, $\displaystyle 4\pi$, $b_K$, $a_K$}
\end{axis}
\end{tikzpicture}
\end{center}
  {\flushleft\footnotesize {\bf Figure 2}: The surface areas of the known maximizers lie between the asymptotic estimates given in \eqref{best-asymptotic}.}

%%%%%%%%%%%%%%%%%%%%%%%%%%%%%%%%%%%%
\section{Graph colorings of polytopes and geometric defects}

Our main strategy will be to rule out possibilities by leveraging the interplay between polytopes and colored graphs.  For a given polytope $P$ in $\R^3$, let $G(P)=(V(P), E(P))$ denote its graph, where $V(P)$ is the set of vertices of $P$ and $E(P)$ is the set of edges of $P$.  A polytope $P$ with congruent isosceles facets corresponds to a 2-coloring of $G(P)$.  The following geometric observation will be useful. 

\begin{lemma}\label{rule-out-lemma}
Let $K\geq 5$ be an integer and suppose that $P\in\mathcal{M}_K$ is a polytope with corresponding 2-colored graph $G(P)$.  If a pair of vertices has three distinct identically colored length two paths between them, then these vertices are antipodal.    
\end{lemma}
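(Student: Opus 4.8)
The plan is to set aside the colored-graph language and argue directly with elementary geometry on $\mathbb{S}^2$. Denote the pair of vertices by $u$ and $v$, and let $w_1,w_2,w_3$ be the middle vertices of the three length-two paths $u\,w_i\,v$. These five points are pairwise distinct: $u\ne v$ because the paths join a \emph{pair} of vertices, $u\ne w_i\ne v$ because each edge of $P$ joins distinct vertices, and $w_i\ne w_j$ for $i\ne j$ because a length-two path between $u$ and $v$ is determined by its middle vertex; this is the reason for the hypothesis $K\ge5$. Since the three paths are identically colored, the three edges incident to $u$ all have a common length $\ell$ and the three edges incident to $v$ all have a common length $\ell'$ (with $\ell=\ell'$ when each path uses a single color).

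First I would convert the distance constraints into linear equations. For $x,y\in\mathbb{S}^2$ we have $|x-y|^2=2-2\langle x,y\rangle$, so $|u-w_i|=\ell$ means $\langle w_i,u\rangle=1-\ell^2/2$ and $|w_i-v|=\ell'$ means $\langle w_i,v\rangle=1-\ell'^2/2$. Thus $w_1,w_2,w_3$ all lie in the affine plane $H_u:=\{x\in\R^3:\langle x,u\rangle=1-\ell^2/2\}$, which has normal direction $u$, and also in the affine plane $H_v:=\{x\in\R^3:\langle x,v\rangle=1-\ell'^2/2\}$, which has normal direction $v$.

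Now comes the crux. The three points $w_1,w_2,w_3$ cannot be collinear, since a line meets $\mathbb{S}^2$ in at most two points; hence they affinely span a $2$-dimensional plane. But this span is contained in the $2$-dimensional affine plane $H_u$, so it must equal $H_u$, and by the same reasoning it equals $H_v$. Therefore $H_u=H_v$, which forces the normal directions to be parallel: $v=\pm u$. The possibility $v=u$ is excluded because $u\ne v$, so $v=-u$, i.e.\ $u$ and $v$ are antipodal. (As a byproduct, equating the two planes gives $1-\ell^2/2=-(1-\ell'^2/2)$, so $\ell^2+\ell'^2=4$; when the paths are monochromatic this reads $\ell=\ell'=\sqrt2$.)

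I do not expect a genuine obstacle; the argument is short, and the only places needing care are mild. One should confirm the distinctness bookkeeping for the five vertices (done above), note that $H_u$ is a bona fide plane because $u\ne0$, and record the elementary fact that three non-collinear points determine a unique $2$-plane. That last point is exactly why \emph{three} identically colored paths are required: two would only pin down the line $w_1w_2$ inside $H_u\cap H_v$ and could not force $H_u=H_v$.
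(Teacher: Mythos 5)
Your argument is correct, and it reaches the conclusion by a genuinely different (and arguably cleaner) route than the paper. The paper works with the single plane $H$ spanned by the three middle vertices: it asserts that the line through $v_1$ and $v_2$ is orthogonal to $H$, observes that the foot of that line in $H$ is equidistant from the three middle vertices (the shared height of the congruent triangles), concludes that this foot is the center of the circle $\mathbb{S}^2\cap H$, and finally invokes the fact that the perpendicular through the center of such a circle passes through the origin. You instead intersect $\mathbb{S}^2$ with the spheres of radius $\ell$ about $u$ and radius $\ell'$ about $v$, turning the two distance constraints into the two affine planes $H_u$ and $H_v$ with normal directions $u$ and $v$; the non-collinearity of three distinct points of $\mathbb{S}^2$ then forces $H_u=H_v$ and hence $v=\pm u$. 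This bypasses the circle, its center, and the orthogonality claim entirely (the paper's orthogonality step is stated without justification and would itself require an argument akin to yours), and it makes the role of the three paths transparent: two middle vertices only pin down a line, three pin down the plane. You also record explicitly the distinctness bookkeeping and the byproduct $\ell^2+\ell'^2=4$, which the paper uses later (e.g.\ in Classes 12 and 14(ii)) without restating. The trade-off is that the paper's synthetic picture of congruent triangles over a common base is closer to how the lemma is applied in the colored-graph arguments, but as a proof yours is tighter.
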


\begin{proof}
Fix a pair of vertices $v_1,v_2\in V(P)$ with the property described.  Let the  three identically colored 2-paths correspond to the vertices $u_1,u_2,u_3\in V(P)$, which lie in the plane $H$ (say).  The sphere meets $H$ in a circle which contains $u_1,u_2$ and $u_3$.  The line $\ell$ containing $v_1$ and $v_2$ is orthogonal to $H$.  Let the point $\pi_\ell\in H$ be the orthogonal projection of $\ell$ into $H$.

For $1\leq j\leq 3$, we have $\dist(\pi_\ell, u_j)=h$, where $h$ is the shared height of the triangles $\triangle[v_1,u_j,v_2]$.  This forces $\pi_\ell$ to be the center of the circle containing $u_1,u_2$ and $u_3$.  Since $\ell$ is orthogonal to $H$, the segment connecting $v_1$ to $v_2$ contains the origin, which means that these vertices are antipodal.
\end{proof}

We can say more if the pair of vertices share an edge.  In particular, we can rule out potential colorings with the following 

\begin{corollary}[Defect A]
Let $K\geq 5$ be an integer and let  $P\in\mathcal{M}_K$ have a 2-colored graph $G(P)$. Suppose that   $v_1,v_2\in V(P)$ satisfy the hypotheses of Lemma \ref{rule-out-lemma} and share an edge of a given color. If $v_1$ or $v_2$ is contained in a different edge of the same color, then $P$ is degenerate.
\end{corollary}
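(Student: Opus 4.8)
The plan is to read off antipodality from Lemma~\ref{rule-out-lemma} and then contradict it using the second monochromatic edge. Since $v_1$ and $v_2$ satisfy the hypotheses of Lemma~\ref{rule-out-lemma}, they are antipodal; as every vertex of $P$ lies on $\mathbb{S}^2$, this means $v_2=-v_1$, so the edge $[v_1,v_2]$ is a diameter of the sphere and $\dist(v_1,v_2)=2$. I will also use the elementary converse, namely that two points of $\mathbb{S}^2$ at distance $2$ must be antipodal (this is the equality case of the Cauchy--Schwarz inequality for unit vectors).

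Next I would observe that in any $P\in\mathcal{M}_K$ all edges carrying a fixed color share a common length. Indeed, every facet of $P$ is congruent to one fixed triangle $T$, so each edge of $P$ has length equal to one of the at most two distinct side lengths of $T$; since the length of an edge does not depend on which of its two incident facets we view it from, the edge set splits into at most two length classes, and these are precisely the color classes of $G(P)$ (in the equilateral case there is a single class and the statement is trivial). In particular, every edge with the color of $[v_1,v_2]$ has length $\dist(v_1,v_2)=2$.

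Now I can finish by contradiction. By hypothesis one of $v_1,v_2$ --- say $v_1$, after relabelling if necessary --- lies on an edge $[v_1,w]$ of the same color as $[v_1,v_2]$ with $w\neq v_2$. By the previous paragraph $\dist(v_1,w)=2$, so $w$ is the antipode of $v_1$ on $\mathbb{S}^2$; but that antipode is unique and equals $v_2$, which forces $w=v_2$, a contradiction. Hence no nondegenerate polytope in $\mathcal{M}_K$ can carry such a configuration, i.e.\ $P$ is degenerate.

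I do not expect a genuine obstacle here; the one point that warrants care is the identification of the color classes of $G(P)$ with the edge-length classes, which uses the mutual congruence of the facets (so that the ``leg'' and ``base'' side lengths of $T$ are globally well defined) together with the trivial fact that an edge plays the same role in both facets meeting along it. The real content of the corollary is simply that this chain of observations can be packaged as a purely combinatorial ``defect'' to be excluded on a candidate colored graph, which is how it will be used later.
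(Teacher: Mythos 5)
Your proposal is correct and follows essentially the same route as the paper: antipodality from Lemma \ref{rule-out-lemma} gives the shared edge length $2$, and the second edge of that color must then also have length $2$, forcing its other endpoint to coincide with the antipode and hence degenerate the polytope. The only difference is that you spell out the identification of color classes with edge-length classes, which the paper leaves implicit.
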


\begin{proof}
Since $v_1$ and $v_2$ are antipodal, their shared edge, colored red (say), has length 2.  Suppose that the other red edge is shared by $v_2$ and $v_3$.  This forces $v_3$ to be $v_1$.
\end{proof}

\begin{remark}
Following the proof, if there is a ``red" length two path connecting \emph{any} three vertices, then $P\notin \mathcal{M}_K$. 
\end{remark}

The next property can be used to eliminate nearly all colorings which feature a degree 3 vertex.

\begin{corollary}[Property $\mathscr{L}$; c.f.  \cite{DHL}]\label{Prop-L-cor}
Let $K\geq 5$ be an integer and suppose that $P\in\mathcal{M}_K$ has a 2-colored graph $G(P)$ and a degree three vertex $v\in V(P)$.  If the edges incident to $v$ are not all the same color,   then the coloring is degenerate.  
\end{corollary}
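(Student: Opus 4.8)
The plan is to argue by contradiction. Suppose $P\in\mathcal{M}_K$ is a genuine polytope with a degree three vertex $v$ whose three incident edges are not all the same color. If the facets of $P$ were equilateral then every edge of $P$ would have a single length, the coloring would be monochromatic, and no vertex could violate the hypothesis; so we may assume the facets are genuinely isosceles, with common leg length $a$ and common base length $b\neq a$, and we speak of the ``leg color'' and the ``base color'' accordingly. Label the three neighbors of $v$ as $v_1,v_2,v_3$ so that the three facets at $v$ are $T_1=\triangle[v,v_1,v_2]$, $T_2=\triangle[v,v_2,v_3]$ and $T_3=\triangle[v,v_3,v_1]$; then $v_1v_2$, $v_2v_3$ and $v_3v_1$ are also edges of $P$, each being an edge of one of these facets.

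The first step is to pin down the local structure. By the pigeonhole principle two of the edges at $v$ share a color, and after a cyclic relabeling of the $v_i$ (which also cyclically permutes $T_1,T_2,T_3$) we may assume $vv_1$ and $vv_2$ do. These two edges meet at $v$ inside $T_1$, so, $T_1$ being isosceles but not equilateral, they must be its two legs; hence $|vv_1|=|vv_2|=a$, $|v_1v_2|=b$, their shared color is the leg color, and $vv_3$ carries the base color, so $|vv_3|=b$. Matching the side-multiset $\{a,a,b\}$ in $T_2=\triangle[v,v_2,v_3]$ and in $T_3=\triangle[v,v_3,v_1]$ then forces $|v_2v_3|=|v_3v_1|=a$. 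In particular $v$ is the apex of $T_1$, and all four of the segments $vv_3$, $v_1v_2$, $v_2v_3$, $v_3v_1$ are edges of $P$, of lengths $b,b,a,a$ respectively.

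The second step is to produce three identically colored length two paths between $v_1$ and $v_2$ and apply Lemma~\ref{rule-out-lemma}. Two leg-colored $2$-paths from $v_1$ to $v_2$ are immediate: one through $v$ and one through $v_3$, each using edges of length $a$. For a third, let $T_1'$ be the facet sharing the edge $v_1v_2$ with $T_1$; since $|v_1v_2|=b$ and $T_1'$ is a congruent isosceles triangle, its other two sides have length $a$, so $T_1'=\triangle[v_1,v_2,w]$ with $|wv_1|=|wv_2|=a$, giving a leg-colored $2$-path from $v_1$ to $v_2$ through $w$. Here $w\neq v$ because $T_1'\neq T_1$, and $w\neq v_3$ because otherwise $\triangle[v_1,v_2,v_3]$ would be a facet of $P$, and then (assuming $v,v_1,v_2,v_3$ affinely independent, for otherwise $v$ is already a degenerate vertex) the four facets $T_1,T_2,T_3,\triangle[v_1,v_2,v_3]$ and their supporting halfspaces would cut $P$ down to the tetrahedron on $\{v,v_1,v_2,v_3\}$, contradicting $K\geq 5$. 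Thus the three $2$-paths are distinct and identically colored, so by Lemma~\ref{rule-out-lemma} the vertices $v_1$ and $v_2$ are antipodal; in particular $|v_1v_2|=2$.

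The final step extracts the degeneracy. Since $v_1$ and $v_2$ are antipodal, $v_1v_2$ is a diameter of $\mathbb{S}^2$, so the plane of $T_1$ passes through the center of $\mathbb{S}^2$ and $T_1$ is inscribed in a great circle having $v_1v_2$ as a diameter; by Thales' theorem the angle of $T_1$ at its apex $v$ equals $\pi/2$. The total face angle of $P$ at $v$ is that apex angle together with the base angles of $T_2$ and $T_3$ at $v$, so it equals $\pi/2+2\beta$, where $\beta$ is the common base angle; since the angles of an isosceles triangle sum to $\pi$, we get $\beta=\pi/4$, and the three face angles at $v$ are $\pi/2,\pi/4,\pi/4$. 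The largest of these equals the sum of the other two, which forces the three edges at $v$, and hence the three facets $T_1,T_2,T_3$, to lie in one plane; but a $3$-polytope cannot have two distinct facets that share an edge and lie in a common plane, so $P$ cannot be a genuine polytope --- the coloring is degenerate. I expect the main obstacle to be the bookkeeping in the second step: producing the facet $T_1'$, verifying that its apex $w$ is distinct from both $v$ and $v_3$, and making airtight the claim that a fourth facet on $\{v,v_1,v_2,v_3\}$ collapses $P$ to a tetrahedron.
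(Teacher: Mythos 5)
Your proof is correct, and its core coincides with the paper's: both arguments identify the two leg-colored neighbors $v_1,v_2$ of $v$, produce three identically (leg-)colored length-two paths between them --- through $v$, through $v_3$, and through the apex $w$ of the facet on the other side of the base edge $v_1v_2$ --- and invoke Lemma~\ref{rule-out-lemma} to conclude that $v_1$ and $v_2$ are antipodal. (You are in fact more careful than the paper in verifying that the three intermediate vertices $v,v_3,w$ are distinct.) Where you genuinely diverge is the endgame. The paper notes that once one base-colored edge is a diameter, \emph{every} base-colored edge has length $2$; it then colors the facet adjacent to $\triangle[v_1,v_2,v_3]$ across a leg edge and obtains a base-colored path of length two, i.e.\ two diameters emanating from one vertex, forcing two vertices of $P$ to coincide (the mechanism of Defect A and the remark following it, which the paper reuses later). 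You instead stay inside the star of $v$: Thales gives apex angle $\pi/2$ in $T_1$, so the face angles at $v$ are $\pi/2,\pi/4,\pi/4$, and equality in the triangle inequality for face angles at a trihedral vertex forces the three edges at $v$ to be coplanar, so $v$ cannot be a genuine vertex. Both conclusions are valid; yours is more local (it needs no facet beyond $T_1'$) and avoids propagating the diameter condition, at the cost of relying on the (standard, but worth stating explicitly) fact that the degenerate case of the spherical triangle inequality for the three edge directions at $v$ forces their coplanarity.
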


\begin{proof}
Such a coloring forces a version of Defect A.  Suppose that each isosceles facet is colored with two blue edges and one red edge.  Let $a,b,c\in V(P)$ be the vertices incident to $v$.  We can generate three blue paths of length 2 which connect two of the vertices that share an edge with $v$ (see the figure below; it will be the edge colored red).

\begin{center}
\begin{tikzpicture}[scale=1]
    \coordinate (u1) at (1,0);
    \coordinate (u2) at ({cos(360/7)},{sin(360/7)});
    \coordinate (u3) at ({cos(2*360/7)},{sin(2*360/7)});
    \coordinate (u4) at ({cos(3*360/7)},{sin(3*360/7)});
    \coordinate (u5) at ({cos(4*360/7)},{sin(4*360/7)});
    \coordinate (u6) at ({cos(5*360/7)},{sin(5*360/7)});
    \coordinate (u7) at ({cos(6*360/7)},{sin(6*360/7)});
    
    \begin{scope}[thick]
    \draw[red] (u1)--(u2);
    \draw[black] (u2)--(u3);
    \draw[black] (u3)--(u4);
    \draw[black] (u4)--(u5);
    \draw[black] (u5)--(u6);
    \draw[red] (u6)--(u7);
    \draw[blue] (u7)--(u1);
    \end{scope}
    
        \begin{scope}[thick]
    \draw[blue] (u2)--(u7);
    \draw[blue] (u2)--(u6);
    \draw[black] (u3)--(u6);
    \draw[black] (u3)--(u5);
    \end{scope}
    
       \begin{scope}[dashed]
    \draw[blue] (u2)--(u4);
    \draw[blue] (u4)--(u1);
    \draw[black] (u5)--(u1);
    \draw[blue] (u6)--(u1);
    \end{scope}
    
     \node[circle,draw,fill=green,scale=0.7] (d1) at (u1){\color{black}$a$};
    \node[circle,draw,fill=green,scale=0.7] (d2) at (u2){\color{black}$b$};
    \node[circle,draw,fill=white,scale=0.7] (d3) at (u3){\color{black}$d$};
    \node[circle,draw,fill=white,scale=0.7] (d4) at (u4){\color{black}};
    \node[circle,draw,fill=white,scale=0.7] (d5) at (u5){\color{black}};
    \node[circle,draw,fill=white,scale=0.7] (d6) at (u6){\color{black}$c$};
    \node[circle,draw,fill=white,scale=0.7] (d7) at (u7){\color{black}$v$};
    
  %      \node[yshift=-9mm] at (L1) {$a$ };    
\end{tikzpicture}
\end{center}

\noindent This forces the aforementioned red edge to be a diameter, as in Defect A.  The vertex $v$ is the apex of a tetrahedron whose base is also an isosceles triangle with two blue edges and one red edge.  Now attempting to color one of the facets that shares a blue edge with the base of this tetrahedron yields a path of length two consisting of red edges.  
In the picture above, we may try to color $\triangle[b,c,d]$ with a red edge between $b$ and $d$, but this would force vertex $a$ to coincide with  vertex $d$.  Alternatively, we may try to color the edge joining $c$ and $d$ red, which would force the vertices $v$ and $d$ to coincide.  Either way, $P\notin \mathcal{M}_K$.  
\end{proof}

\begin{remark}
As we shall see, Corollary \ref{Prop-L-cor} can be used to  greatly reduce the amount of plausible colorings,  and allows us to use Property $\mathscr{L}$ from \cite{DHL}.
\end{remark}

The next result deals with potential colorings of $G(P)$ in the setting of Lemma \ref{rule-out-lemma}, but now assumes further that each of the paths between the vertices has the same color.

\begin{corollary}[Defect B]
Let $K\geq 5$ be an integer and let $P\in\mathcal{M}_K$ have 2-colored graph $G(P)$. If $v_1,v_2\in V(P)$ satisfy the hypotheses of Lemma \ref{rule-out-lemma}, and (at least) three of the paths are one color and the remaining paths are the alternate color, then the corresponding edge lengths are equal. 
\end{corollary}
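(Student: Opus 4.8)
The plan is to feed the configuration into Lemma~\ref{rule-out-lemma} to obtain antipodality, and then to exploit that antipodality together with the congruence of the facets to pin down every edge length appearing on these paths. First, the hypothesis provides (at least) three length-two paths between $v_1$ and $v_2$ that are all the same color, hence three distinct identically colored length-two paths; so Lemma~\ref{rule-out-lemma} applies and $v_1,v_2$ are antipodal. Write $v_2=-v_1$, so that $|v_1-v_2|=2$ and the midpoint of the segment $[v_1,v_2]$ is the origin $o$.

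Next I would examine a single monochromatic path $v_1-u-v_2$, of either color. Since $P$ has congruent isosceles facets and the two-coloring assigns one color to the legs and one to the bases, all edges of a fixed color have a common length; in particular $|v_1-u|=|u-v_2|=:s$, where $s$ is the length attached to that color. Thus $u$ is equidistant from $v_1$ and $v_2$, so it lies in the perpendicular bisector plane of $[v_1,v_2]$, which by antipodality is the plane through $o$ orthogonal to the line $\ell$ through $v_1,v_2$. Hence the orthogonal projection of $u$ onto $\ell$ is $o$, and the distance from $u$ to $\ell$ equals $|u-o|=|u|=1$ because $u\in\mathbb{S}^2$. On the other hand, this same distance is the altitude from the apex $u$ of the isosceles triangle $\triangle[v_1,u,v_2]$, namely $\sqrt{s^2-(|v_1-v_2|/2)^2}=\sqrt{s^2-1}$. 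Equating forces $s=\sqrt2$.

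Finally I would apply the identity $s=\sqrt2$ once for the majority color and once for the remaining color: the edges along the first family of paths have length $\sqrt2$, and the edges along the second family (if nonempty) also have length $\sqrt2$. Therefore the two edge lengths coincide, which is the claim; equivalently, the facets of $P$ are forced to be equilateral of side $\sqrt2$. The argument is symmetric in the two colors, so it makes no difference which color is in the majority.

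I do not expect a substantive obstacle here. The only two points needing care are that a monochromatic path has its two edges of equal length --- immediate from congruence of the facets and the leg/base coloring convention --- and the observation that antipodality is exactly what puts the foot of the perpendicular from $u$ to $\ell$ at $o$, so that the triangle's altitude is compelled to equal the unit radius of the sphere.
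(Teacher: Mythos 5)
Your proof is correct and follows essentially the same route as the paper's: antipodality of $v_1,v_2$ via Lemma \ref{rule-out-lemma}, then the isosceles condition forces every apex into the perpendicular bisector plane of $[v_1,v_2]$ through the origin, whence both colors carry the same edge length. You additionally compute the common value $\sqrt{2}$ explicitly, which the paper leaves implicit.
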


\begin{proof}
Suppose that  three paths are colored blue and  correspond to vertices $u_j$ and the remaining  paths (colored red) correspond to the vertices $w_k$.  Each of the triangles $\triangle[v_1,u_j,v_2]$ and $\triangle[v_1,w_j,v_2]$ are isosceles. Hence the plane $H$ contains both sets of points $\{u_j\}$ and $\{w_k\}$ since it contains the midpoint of the segment joining $v_1$ and $v_2$.   
\end{proof}

\begin{remark}
Defect B is useful in that it forces equilateral facets.  By the well-known fact that the angle defect around any vertex of a convex polytope must be positive, we can thus rule out all polytopes which exhibit Defect B and contain a vertex of degree six or higher.
\end{remark}

There is one more special case that deserves mention.
\begin{corollary}[Defect C]
Let $K\geq 5$ be an integer and suppose that $P\in\mathcal{M}_K$ has a 2-colored graph $G(P)$.  Suppose that there is a vertex $v\in V(P)$ incident to four coplanar vertices, where the edges are colored in alternating fashion.  Then these edge lengths are the same.  
\end{corollary}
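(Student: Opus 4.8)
The plan is to argue exactly in the spirit of Lemma~\ref{rule-out-lemma}: the four coplanar neighbours of $v$ lie on a circle, and the alternating colours pin the foot of the perpendicular from $v$ to that plane to the centre of the circle, which forces all edges at $v$ to be congruent. To set up, write $u_1,u_2,u_3,u_4$ for the four coplanar vertices adjacent to $v$, listed in the cyclic order in which the edges $vu_j$ emanate from $v$, so that $vu_1,vu_3$ share one colour and $vu_2,vu_4$ share the other. Recall that the $2$-colouring of $G(P)$ records the (at most two) edge lengths of $P$, so edges of the same colour are congruent; hence $|v-u_1|=|v-u_3|=:a$ and $|v-u_2|=|v-u_4|=:c$, and the claim to be proved is $a=c$. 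Let $H$ be the plane through $u_1,\dots,u_4$. Since these four points are distinct and lie on $\mathbb{S}^2\cap H$, the set $C:=\mathbb{S}^2\cap H$ is a genuine circle containing all of them; let $o_H$ be its centre (the orthogonal projection of the origin onto $H$) and $R$ its radius. Finally, let $m$ be the orthogonal projection of $v$ onto $H$.

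For the main step, note that $v-m\perp H$ while $m-u_j\in H$, so the Pythagorean theorem gives $|v-u_j|^2=|v-m|^2+|m-u_j|^2$ for $j=1,2,3,4$. Therefore $|m-u_1|=|m-u_3|$ and $|m-u_2|=|m-u_4|$, i.e.\ $m$ lies on the perpendicular bisector of the segment $[u_1,u_3]$ and also on the perpendicular bisector of $[u_2,u_4]$. The centre $o_H$ of $C$ lies on both of these perpendicular bisectors as well, since it is equidistant from $u_1,u_2,u_3,u_4$. Now $[u_1,u_3]$ and $[u_2,u_4]$ are the two diagonals of the quadrilateral $u_1u_2u_3u_4$: the vertex figure at $v$ is a convex polygon, so the $u_j$ occur along $C$ in the cyclic order $u_1,u_2,u_3,u_4$, whence the diagonals genuinely cross and in particular are not parallel. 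Two lines that are not parallel and share the point $o_H$ meet only there, so $m=o_H$. Consequently $|m-u_j|=R$ for every $j$, and hence $|v-u_j|^2=|v-m|^2+R^2$ is independent of $j$; in particular $a=c$, as desired. (Together with $P\in\mathcal{M}_K$, this also shows that the facets at $v$, and hence all facets of $P$, are equilateral.)

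The one point requiring care is the conclusion $m=o_H$: one must rule out the degenerate possibility $[u_1,u_3]\parallel[u_2,u_4]$, since then $m$ would only be known to lie on a single diameter of $C$ and could not be identified with $o_H$. This is exactly what the convexity of the vertex figure of $v$ excludes. If one prefers not to invoke that, the same conclusion is reachable through the facet structure: consecutive neighbours around $v$ span faces $\triangle[v,u_i,u_{i+1}]$, each isosceles, and since the two $v$-edges of such a face carry different colours (by the alternating hypothesis), its third edge $u_iu_{i+1}$ must carry the doubled colour; thus $|u_1-u_2|=|u_2-u_3|=|u_3-u_4|=|u_4-u_1|$, so $u_1u_2u_3u_4$ is a rhombus inscribed in $C$, hence a square, whose diagonals are perpendicular and meet at $o_H$ — again forcing $m=o_H$ and then $a=c$.
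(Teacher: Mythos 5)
Your proof is correct and follows essentially the same strategy as the paper's: project $v$ orthogonally onto the plane $H$ of the four neighbours, show that the alternating colours force the foot of the perpendicular to coincide with the centre of the circle $\mathbb{S}^2\cap H$, and conclude via the Pythagorean theorem. The only difference is in one micro-step — you pin the foot to the centre as the unique intersection of the perpendicular bisectors of the two crossing diagonals, while the paper argues by contradiction with an auxiliary disc — but the geometric content is identical.
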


\begin{proof}
The plane containing the four vertices intersects the unit ball in a disc $D$.  Let the point $\pi_v$ be the orthogonal projection of $v$ into $D$.  Then we have two pairs of congruent right triangles in an alternating position.  Consider the legs of these triangles, of length $l_1$ and $l_2$, that lie in the disc emanating from $\pi_v$. Now construct the disc $D_1$ centered at $\pi_v$ of radius $l_1$.  If $\pi_v$ is not the center of $D$, then one of the legs of length $l_2$ is interior to $D_1$, which shows that $l_1>l_2$.  However, since the lengths are alternating, the other leg of length $l_2$ passes out of $D_1$ which shows that $l_1<l_2$.  But this is impossible; hence we must have that $\pi_v$ is the center of $D$, which means that $l_1=l_2$.  The result follows.   
\end{proof}

\begin{remark}
Defect C can occur, for example, if (as in Defect B) a 2-colored graph has four 1-color paths: two red and two blue. The isosceles triangles provide that the four incident vertices are coplanar; if the corresponding edge colors alternate, then Defect C is present.
\end{remark}

%%%%%%%%%%%%%%%%%%%%%%
\section{Warm-up: The cases $K=4,5,6$}

To demonstrate the method of using Defects A, B and C to rule out combinatorial types, we show how they can be used to determine the surface area maximizers in $\mathcal{M}_K$ for $K=4,5,6$.

\subsection{$K=4$} 
In this case, Property $\mathscr{L}$ forces the tetrahedron to be regular.
\begin{center}
\begin{tikzpicture}[scale=1]
    
    \coordinate (u1) at (1,0);
    \coordinate (u2) at ({cos(360/4)},{sin(360/4)});
    \coordinate (u3) at ({cos(2*360/4)},{sin(2*360/4)});
    \coordinate (u4) at ({cos(3*360/4)},{sin(3*360/4)});
  %  \coordinate (u5) at ({cos(4*360/7)},{sin(4*360/7)});
  %  \coordinate (u6) at ({cos(5*360/7)},{sin(5*360/7)});
  %  \coordinate (u7) at ({cos(6*360/7)},{sin(6*360/7)});
    
    \begin{scope}[thick]
    \draw[blue] (u1)--(u2);
    \draw[blue] (u2)--(u3);
    \draw[red] (u3)--(u4);
    \draw[red] (u4)--(u1);
    \draw[blue] (u2)--(u4);
  %  \draw[blue] (u6)--(u7);
  %  \draw[red] (u7)--(u1);
    \end{scope}
    
        \begin{scope}[thick]
  %  \draw[blue] (u2)--(u7);
  %  \draw[red] (u2)--(u6);
  %  \draw[blue] (u3)--(u6);
  %  \draw[blue] (u3)--(u5);
    \end{scope}
    
       \begin{scope}[dashed]
    \draw[red] (u1)--(u3);
  %  \draw[red] (u4)--(u1);
  %  \draw[blue] (u5)--(u1);
  %  \draw[blue] (u6)--(u1);
    \end{scope}
    
     \node[circle,draw,fill=white,scale=0.6] (d1) at (u1){3};
    \node[circle,draw,fill=white,scale=0.6] (d2) at (u2){3};
    \node[circle,draw,fill=white,scale=0.6] (d3) at (u3){3};
    \node[circle,draw,fill=white,scale=0.6] (d4) at (u4){3};
  %  \node[circle,draw,fill=white,scale=0.8] (d5) at (u5){4};
  %  \node[circle,draw,fill=green,scale=0.8] (d6) at (u6){5};
  %  \node[circle,draw,fill=white,scale=0.8] (d7) at (u7){3};

     % \coordinate (L1) at (0,{-sqrt(3)/2});
      %  \node[yshift=-9mm] at (L1) {Defect A};   
\end{tikzpicture}
\end{center}

\subsection{$K=5$} 
Property $\mathscr{L}$ forces the surface area maximizer in $\mathcal{M}_5$ to be a triangular bipyramid with an equilateral triangle connecting the degree 4 vertices, and the two degree 3 vertices must be antipodal by Lemma \ref{rule-out-lemma}.

\begin{center}
\begin{tikzpicture}[scale=1]
    
    \coordinate (u1) at (1,0);
    \coordinate (u2) at ({cos(360/5)},{sin(360/5)});
    \coordinate (u3) at ({cos(2*360/5)},{sin(2*360/5)});
    \coordinate (u4) at ({cos(3*360/5)},{sin(3*360/5)});
    \coordinate (u5) at ({cos(4*360/5)},{sin(4*360/5)});
  %  \coordinate (u6) at ({cos(5*360/7)},{sin(5*360/7)});
  %  \coordinate (u7) at ({cos(6*360/7)},{sin(6*360/7)});
    
    \begin{scope}[thick]
    \draw[blue] (u1)--(u2);
    \draw[blue] (u2)--(u3);
    \draw[blue] (u3)--(u4);
    \draw[red] (u4)--(u5);
    \draw[blue] (u5)--(u1);
   % \draw[blue] (u6)--(u7);
  %  \draw[red] (u7)--(u1);
    \end{scope}
    
        \begin{scope}[thick]
   \draw[red] (u2)--(u4);
    \draw[blue] (u4)--(u1);
  %  \draw[blue] (u3)--(u6);
  %  \draw[blue] (u3)--(u5);
    \end{scope}
    
       \begin{scope}[dashed]
    \draw[red] (u5)--(u2);
    \draw[blue] (u5)--(u3);
  %  \draw[blue] (u5)--(u1);
  %  \draw[blue] (u6)--(u1);
    \end{scope}
    
     \node[circle,draw,fill=green,scale=0.6] (d1) at (u1){3};
    \node[circle,draw,fill=white,scale=0.6] (d2) at (u2){4};
    \node[circle,draw,fill=green,scale=0.6] (d3) at (u3){3};
    \node[circle,draw,fill=white,scale=0.6] (d4) at (u4){4};
   \node[circle,draw,fill=white,scale=0.6] (d5) at (u5){4};
  %  \node[circle,draw,fill=green,scale=0.8] (d6) at (u6){5};
  %  \node[circle,draw,fill=white,scale=0.8] (d7) at (u7){3};

    %  \coordinate (L1) at (0,{-sqrt(3)/2});
    %    \node[yshift=-9mm] at (L1) {Defect A};    

\end{tikzpicture}

\end{center}

\subsection{$K=6$} 
The possible 2-colored graphs that arise in this case are depicted below. The one with a degree 3 vertex is subject to Property $\mathscr{L}$, which subsequently forces $a=b$. However, this is impossible since it would force the two degree 5 vertices to be antipodal, which would mean that all of the edges have length 2, a contradiction.

There are two plausible 2-colorings for the graph of an inscribed octahedron. The first, labeled Class 2 (i),  has a cycle of 4 red edges. By Lemma \ref{rule-out-lemma}, this forces the configuration to be that of a regular octahedron. (One could also use Defect C here to get antipodal points in the red ring of vertices.)

The second 2-coloring, labeled Class 2 (ii), has two red paths of length 2. But then Lemma \ref{rule-out-lemma} forces a pair of antipodal points, which then forces $a=b$, which again leads to the regular octahedron.

\begin{center}
\begin{tikzpicture}[scale=1]
    
    \coordinate (u1) at (1,0);
    \coordinate (u2) at ({cos(360/6)},{sin(360/6)});
    \coordinate (u3) at ({cos(2*360/6)},{sin(2*360/6)});
    \coordinate (u4) at ({cos(3*360/6)},{sin(3*360/6)});
    \coordinate (u5) at ({cos(4*360/6)},{sin(4*360/6)});
    \coordinate (u6) at ({cos(5*360/6)},{sin(5*360/6)});
  %  \coordinate (u7) at ({cos(6*360/7)},{sin(6*360/7)});
    
    \begin{scope}[thick]
    \draw[red] (u1)--(u2);
    \draw[blue] (u2)--(u3);
    \draw[blue] (u3)--(u4);
    \draw[blue] (u4)--(u5);
    \draw[blue] (u5)--(u6);
    \draw[black] (u6)--(u1);
  %  \draw[red] (u7)--(u1);
    \end{scope}
    
        \begin{scope}[thick]
   \draw[red] (u2)--(u4);
    \draw[blue] (u2)--(u5);
    \draw[red] (u2)--(u6);
  %  \draw[blue] (u3)--(u5);
    \end{scope}
    
       \begin{scope}[dashed]
    \draw[blue] (u1)--(u3);
    \draw[red] (u1)--(u4);
    \draw[red] (u4)--(u6);
  %  \draw[blue] (u6)--(u1);
    \end{scope}
    
     \node[circle,draw,fill=white,scale=0.6] (d1) at (u1){4};
    \node[circle,draw,fill=white,scale=0.6] (d2) at (u2){5};
    \node[circle,draw,fill=white,scale=0.6] (d3) at (u3){3};
    \node[circle,draw,fill=white,scale=0.6] (d4) at (u4){5};
   \node[circle,draw,fill=white,scale=0.6] (d5) at (u5){3};
    \node[circle,draw,fill=white,scale=0.6] (d6) at (u6){4};
  %  \node[circle,draw,fill=white,scale=0.8] (d7) at (u7){3};

      \coordinate (L1) at (0,{-sqrt(3)/2});
        \node[yshift=-9mm] at (L1) {Class 1};

\end{tikzpicture}
\begin{tikzpicture}[scale=1]
    
    \coordinate (u1) at (1,0);
    \coordinate (u2) at ({cos(360/6)},{sin(360/6)});
    \coordinate (u3) at ({cos(2*360/6)},{sin(2*360/6)});
    \coordinate (u4) at ({cos(3*360/6)},{sin(3*360/6)});
    \coordinate (u5) at ({cos(4*360/6)},{sin(4*360/6)});
    \coordinate (u6) at ({cos(5*360/6)},{sin(5*360/6)});
  %  \coordinate (u7) at ({cos(6*360/7)},{sin(6*360/7)});
    
    \begin{scope}[thick]
    \draw[red] (u1)--(u2);
    \draw[blue] (u2)--(u3);
    \draw[blue] (u3)--(u4);
    \draw[red] (u4)--(u5);
    \draw[blue] (u5)--(u6);
    \draw[blue] (u6)--(u1);
  %  \draw[red] (u7)--(u1);
    \end{scope}
    
        \begin{scope}[thick]
   \draw[blue] (u1)--(u3);
    \draw[blue] (u3)--(u5);
    \draw[red] (u5)--(u1);
  %  \draw[blue] (u3)--(u5);
    \end{scope}
    
       \begin{scope}[dashed]
    \draw[red] (u2)--(u4);
    \draw[blue] (u4)--(u6);
    \draw[blue] (u6)--(u2);
  %  \draw[blue] (u6)--(u1);
    \end{scope}
    
     \node[circle,draw,fill=white,scale=0.6] (d1) at (u1){4};
    \node[circle,draw,fill=white,scale=0.6] (d2) at (u2){4};
    \node[circle,draw,fill=green,scale=0.6] (d3) at (u3){4};
    \node[circle,draw,fill=white,scale=0.6] (d4) at (u4){4};
   \node[circle,draw,fill=white,scale=0.6] (d5) at (u5){4};
    \node[circle,draw,fill=green,scale=0.6] (d6) at (u6){4};
  %  \node[circle,draw,fill=white,scale=0.8] (d7) at (u7){3};

      \coordinate (L1) at (0,{-sqrt(3)/2});
        \node[yshift=-9mm] at (L1) {Class 2 (i)};

\end{tikzpicture}
\begin{tikzpicture}[scale=1]
    
    \coordinate (u1) at (1,0);
    \coordinate (u2) at ({cos(360/6)},{sin(360/6)});
    \coordinate (u3) at ({cos(2*360/6)},{sin(2*360/6)});
    \coordinate (u4) at ({cos(3*360/6)},{sin(3*360/6)});
    \coordinate (u5) at ({cos(4*360/6)},{sin(4*360/6)});
    \coordinate (u6) at ({cos(5*360/6)},{sin(5*360/6)});
  %  \coordinate (u7) at ({cos(6*360/7)},{sin(6*360/7)});
    
    \begin{scope}[thick]
    \draw[blue] (u1)--(u2);
    \draw[red] (u2)--(u3);
    \draw[blue] (u3)--(u4);
    \draw[red] (u4)--(u5);
    \draw[blue] (u5)--(u6);
    \draw[blue] (u6)--(u1);
  %  \draw[red] (u7)--(u1);
    \end{scope}
    
        \begin{scope}[thick]
   \draw[blue] (u1)--(u3);
    \draw[blue] (u3)--(u5);
    \draw[red] (u5)--(u1);
  %  \draw[blue] (u3)--(u5);
    \end{scope}
    
       \begin{scope}[dashed]
    \draw[blue] (u2)--(u4);
    \draw[blue] (u4)--(u6);
    \draw[red] (u6)--(u2);
  %  \draw[blue] (u6)--(u1);
    \end{scope}
    
     \node[circle,draw,fill=white,scale=0.6] (d1) at (u1){4};
    \node[circle,draw,fill=white,scale=0.6] (d2) at (u2){4};
    \node[circle,draw,fill=green,scale=0.6] (d3) at (u3){4};
    \node[circle,draw,fill=white,scale=0.6] (d4) at (u4){4};
   \node[circle,draw,fill=white,scale=0.6] (d5) at (u5){4};
    \node[circle,draw,fill=green,scale=0.6] (d6) at (u6){4};
  %  \node[circle,draw,fill=white,scale=0.8] (d7) at (u7){3};

      \coordinate (L1) at (0,{-sqrt(3)/2});
        \node[yshift=-9mm] at (L1) {Class 2 (ii)};    
\end{tikzpicture}
\end{center}

%%%%%%%%%%%%%%%%%%%%%%%%%%%%
\section{Proof of Theorem \ref{7vertices}}

A convex polytope in $\R^3$ is \emph{simplicial} if it has triangular facets. There are precisely 5 nonisomorphic combinatorial types of simplicial convex  polytopes with 7 vertices (see, for example, the article \cite{BrittonDunitz1973} by Britton and Dunitz). One of these classes is that of the pentagonal bipyramid. Note that each polytope in any of the other four classes has a degree 3 vertex. For the reader's convenience, we include a diagram of the four simplicial classes of polytopes that have 7 vertices and at least one  degree 3 vertex. (These figures are recreations of the corresponding ones in \cite{BrittonDunitz1973}.) We then describe the conclusions in each class. 

\begin{center}
\begin{tikzpicture}[scale=1]
    
    \coordinate (v1) at (1,0);
    \coordinate (v2) at (1/2,{sqrt(3)/2});
    \coordinate (v3) at (-1/2,{sqrt(3)/2});
    \coordinate (v4) at (-1,0);
    \coordinate (v5) at (-1/2,{-sqrt(3)/2});
    \coordinate (v6) at (1/2,{-sqrt(3)/2});
    \coordinate (v7) at (0,0.25);
        
    \begin{scope}[thick]
    \draw (v1)--(v2);
    \draw (v2)--(v3);
    \draw (v3)--(v4);
    \draw (v4)--(v5);
    \draw (v5)--(v6);
    \draw (v6)--(v1);
    \draw (v7)--(v1);
    \draw (v7)--(v2);
    \draw (v7)--(v3);
    \draw (v7)--(v4);
    \draw (v7)--(v5);
    \draw (v7)--(v6);
    \end{scope}
    
       \begin{scope}[dashed]
    \draw (v1)--(v3);
    \draw (v3)--(v5);
    \draw (v5)--(v1);
    \end{scope}
    
     \node[circle,draw,fill=white,scale=0.6] (c1) at (v1){5};
    \node[circle,draw,fill=white,scale=0.6] (c2) at (v2){3};
    \node[circle,draw,fill=white,scale=0.6] (c3) at (v3){5};
    \node[circle,draw,fill=white,scale=0.6] (c4) at (v4){3};
    \node[circle,draw,fill=white,scale=0.6] (c5) at (v5){5};
    \node[circle,draw,fill=white,scale=0.6] (c6) at (v6){3};
    \node[circle,draw,fill=white,scale=0.6] (c7) at (v7){6};
    
    \coordinate (L1) at (0,{-sqrt(3)/2});
    \node[yshift=-7mm] at (L1) {Class 1};    
    
\end{tikzpicture} 
\begin{tikzpicture}[scale=1]
    
    \begin{scope}[thick]
    \draw (v1)--(v2);
    \draw (v2)--(v3);
    \draw (v3)--(v4);
    \draw (v4)--(v5);
    \draw (v5)--(v6);
    \draw (v6)--(v1);
    \draw (v7)--(v1);
    \draw (v7)--(v2);
    \draw (v7)--(v3);
    \draw (v7)--(v4);
    \draw (v7)--(v5);
    \draw (v7)--(v6);
    \end{scope}
    
       \begin{scope}[dashed]
    \draw (v1)--(v3);
    \draw (v4)--(v1);
    \draw (v5)--(v1);
    \end{scope}
    
     \node[circle,draw,fill=white,scale=0.6] (c1) at (v1){6};
    \node[circle,draw,fill=white,scale=0.6] (c2) at (v2){3};
    \node[circle,draw,fill=white,scale=0.6] (c3) at (v3){4};
    \node[circle,draw,fill=white,scale=0.6] (c4) at (v4){4};
    \node[circle,draw,fill=white,scale=0.6] (c5) at (v5){4};
    \node[circle,draw,fill=white,scale=0.6] (c6) at (v6){3};
    \node[circle,draw,fill=white,scale=0.6] (c7) at (v7){6};
    
        \node[yshift=-7mm] at (L1) {Class 2};    

\end{tikzpicture}
\begin{tikzpicture}[scale=1]
    
    \begin{scope}[thick]
    \draw (v1)--(v2);
    \draw (v2)--(v3);
    \draw (v3)--(v4);
    \draw (v4)--(v5);
    \draw (v5)--(v6);
    \draw (v6)--(v1);
    \draw (v7)--(v1);
    \draw (v7)--(v2);
    \draw (v7)--(v3);
    \draw (v7)--(v4);
    \draw (v7)--(v5);
    \draw (v7)--(v6);
    \end{scope}
    
       \begin{scope}[dashed]
    \draw (v4)--(v2);
    \draw (v4)--(v1);
    \draw (v5)--(v1);
    \end{scope}
    
     \node[circle,draw,fill=white,scale=0.6] (c1) at (v1){5};
    \node[circle,draw,fill=white,scale=0.6] (c2) at (v2){4};
    \node[circle,draw,fill=white,scale=0.6] (c3) at (v3){3};
    \node[circle,draw,fill=white,scale=0.6] (c4) at (v4){5};
    \node[circle,draw,fill=white,scale=0.6] (c5) at (v5){4};
    \node[circle,draw,fill=white,scale=0.6] (c6) at (v6){3};
    \node[circle,draw,fill=white,scale=0.6] (c7) at (v7){6};

    \node[yshift=-7mm] at (L1) {Class 3};    
    
\end{tikzpicture}
\begin{tikzpicture}[scale=1]
    
    \coordinate (u1) at (1,0);
    \coordinate (u2) at ({cos(360/7)},{sin(360/7)});
    \coordinate (u3) at ({cos(2*360/7)},{sin(2*360/7)});
    \coordinate (u4) at ({cos(3*360/7)},{sin(3*360/7)});
    \coordinate (u5) at ({cos(4*360/7)},{sin(4*360/7)});
    \coordinate (u6) at ({cos(5*360/7)},{sin(5*360/7)});
    \coordinate (u7) at ({cos(6*360/7)},{sin(6*360/7)});
    
    \begin{scope}[thick]
    \draw (u1)--(u2);
    \draw (u2)--(u3);
    \draw (u3)--(u4);
    \draw (u4)--(u5);
    \draw (u5)--(u6);
    \draw (u6)--(u7);
    \draw (u7)--(u1);
    \end{scope}
    
        \begin{scope}[thick]
    \draw (u2)--(u7);
    \draw (u2)--(u6);
    \draw (u3)--(u6);
    \draw (u3)--(u5);
    \end{scope}
    
       \begin{scope}[dashed]
    \draw (u2)--(u4);
    \draw (u4)--(u1);
    \draw (u5)--(u1);
    \draw (u6)--(u1);
    \end{scope}
    
     \node[circle,draw,fill=white,scale=0.6] (d1) at (u1){5};
    \node[circle,draw,fill=white,scale=0.6] (d2) at (u2){5};
    \node[circle,draw,fill=white,scale=0.6] (d3) at (u3){4};
    \node[circle,draw,fill=white,scale=0.6] (d4) at (u4){4};
    \node[circle,draw,fill=white,scale=0.6] (d5) at (u5){4};
    \node[circle,draw,fill=white,scale=0.6] (d6) at (u6){5};
    \node[circle,draw,fill=white,scale=0.6] (d7) at (u7){3};
    
        \node[yshift=-9mm] at (L1) {Class 4};    
\end{tikzpicture}
\end{center}

In each of these 4 classes, every plausible 2-coloring is ruled out by Defect A, save for a single coloring in Class 2, shown below.
\begin{center}
\begin{tikzpicture}[scale=1]
    \begin{scope}[thick]
    \draw[blue] (v1)--(v2);
    \draw[red] (v2)--(v3);
    \draw[red] (v3)--(v4);
    \draw[red] (v4)--(v5);
    \draw[red] (v5)--(v6);
    \draw[blue] (v6)--(v1);
    \draw[red] (v7)--(v1);
    \draw[blue] (v7)--(v2);
    \draw[blue] (v7)--(v3);
    \draw[blue] (v7)--(v4);
    \draw[blue] (v7)--(v5);
    \draw[blue] (v7)--(v6);
    \end{scope}
    
       \begin{scope}[dashed]
    \draw[blue] (v1)--(v3);
    \draw[blue] (v4)--(v1);
    \draw[blue] (v5)--(v1);
    \end{scope}
    
     \node[circle,draw,fill=yellow,scale=0.6] (c1) at (v1){6};
    \node[circle,draw,fill=white,scale=0.6] (c2) at (v2){3};
    \node[circle,draw,fill=white,scale=0.6] (c3) at (v3){4};
    \node[circle,draw,fill=white,scale=0.6] (c4) at (v4){4};
    \node[circle,draw,fill=white,scale=0.6] (c5) at (v5){4};
    \node[circle,draw,fill=white,scale=0.6] (c6) at (v6){3};
    \node[circle,draw,fill=yellow,scale=0.6] (c7) at (v7){6};
    
\end{tikzpicture}
\end{center}
Notice that the two degree 6 vertices are antipodal by Lemma \ref{rule-out-lemma}. This  makes the chain of vertices connected with red edges an impossible configuration. 

We have thus shown that the surface area maximizer in $\mathcal{M}_7$ must be combinatorially equivalent to the pentagonal bipyramid.  By \cite[Cor. 2]{DHL}, the maximum surface area bipyramid has two vertices at the north and south poles and five more forming an equilateral pentagon in the equator. It has congruent isosceles facets and  surface area $\frac{5}{4}\sqrt{50-6\sqrt{5}}\approx 7.56$. This concludes the proof of Theorem \ref{7vertices}. \qed

%{\color{red} For the 7 vertex problem, nearly every coloring is ruled out by Defect A, in Class 4, most are ruled out by Defect B.In terms of the 8 vertex problem, every potential coloring in Classes 2 through 7 may be ruled out using Defect A.  There are only three possible colorings for classes which have a degree three vertex.  The first colorings of Classes 8, 10, and 12.

%For Class 14, there are four possibilities, the rest exhibit Defect B or the Lemma applies twice to three points, which is impossible.}

%%%%%%%%%%%%%%%%%%%%%%%%%%%%%%%%%%%%
\section{Proof of Theorem \ref{mainThm}}

\subsection{Simplicial polytopes with eight  vertices}

Britton and Dunitz   \cite{BrittonDunitz1973} provided graphs of all 257 combinatorial types of polytopes with eight vertices. Fourteen of these classes are simplicial. We recreate their graphs in the figure below.

\begin{center}
\begin{tikzpicture}[scale=1]
    \coordinate (v1) at (0.9239795325,0.3826834324);
    \coordinate (v2) at (0.3826834324,0.9239795325);
    \coordinate (v3) at (-0.3826834324,0.9239795325);
    \coordinate (v4) at (-0.9239795325,0.3826834324);
    \coordinate (v5) at (-0.9239795325,-0.3826834324);
    \coordinate (v6) at (-0.3826834324,-0.9239795325);
    \coordinate (v7) at (0.3826834324,-0.9239795325);
    \coordinate (v8) at (0.9239795325,-0.3826834324); 

       \begin{scope}[dashed]
    \draw (v1)--(v7);
    \draw (v2)--(v7);
    \draw (v3)--(v7);
    \draw (v3)--(v6);
    \draw (v3)--(v5);
    \end{scope}
    
    \begin{scope}[thick]
    \draw (v1)--(v2);
    \draw (v2)--(v3);
    \draw (v3)--(v4);
    \draw (v4)--(v5);
    \draw (v5)--(v6);
    \draw (v6)--(v7);
    \draw (v7)--(v8);
    \draw (v8)--(v1);
    \draw (v1)--(v3);
    \draw (v1)--(v4);
    \draw (v1)--(v5);
    \draw (v5)--(v8);
    \draw (v5)--(v7);
    \end{scope}
    
    \node[circle,draw,fill=white,scale=0.4] (c1) at (v1){6};
    \node[circle,draw,fill=white,scale=0.4] (c2) at (v2){3};
    \node[circle,draw,fill=white,scale=0.4] (c3) at (v3){6};
    \node[circle,draw,fill=white,scale=0.4] (c4) at (v4){3};
    \node[circle,draw,fill=white,scale=0.4] (c5) at (v5){6};
    \node[circle,draw,fill=white,scale=0.4] (c6) at (v6){3};
    \node[circle,draw,fill=white,scale=0.4] (c7) at (v7){6};
    \node[circle,draw,fill=white,scale=0.4] (c7) at (v8){3};
    
    \coordinate (L1) at (0,{-sqrt(3)/2});
    \node[yshift=-7mm] at (L1) {Class 1};   
\end{tikzpicture}    
\begin{tikzpicture}[scale=1]
    
    \coordinate (v1) at (1,0);
    \coordinate (v2) at ({sqrt(2)/2},{sqrt(2)/2});
    \coordinate (v3) at (0,1);
    \coordinate (v4) at ({-sqrt(2)/2},{sqrt(2)/2});
    \coordinate (v5) at (-1,0);
    \coordinate (v6) at ({-sqrt(2)/2},{-sqrt(2)/2});
    \coordinate (v7) at (0,-1);
    \coordinate (v8) at ({sqrt(2)/2},{-sqrt(2)/2});

   \begin{scope}[dashed]
    \draw (v2)--(v4);
    \draw (v2)--(v6);
    \draw (v2)--(v8);
    \draw (v4)--(v6);
    \draw (v6)--(v8);
    \end{scope}
        
    \begin{scope}[thick]
    \draw (v1)--(v2);
    \draw (v1)--(v3);
    \draw (v2)--(v3);
    \draw (v3)--(v4);
    \draw (v4)--(v5);
    \draw (v5)--(v6);
    \draw (v6)--(v7);
    \draw (v7)--(v8);
    \draw (v1)--(v8);
    \draw (v3)--(v5);
    \draw (v3)--(v6);
    \draw (v3)--(v7);
    \draw (v3)--(v8);
    \end{scope}
    
    \node[circle,draw,fill=white,scale=0.4] (c1) at (v1){3};
    \node[circle,draw,fill=white,scale=0.4] (c2) at (v2){5};
    \node[circle,draw,fill=white,scale=0.4] (c3) at (v3){7};
    \node[circle,draw,fill=white,scale=0.4] (c4) at (v4){4};
    \node[circle,draw,fill=white,scale=0.4] (c5) at (v5){3};
    \node[circle,draw,fill=white,scale=0.4] (c6) at (v6){6};
    \node[circle,draw,fill=white,scale=0.4] (c7) at (v7){3};
    \node[circle,draw,fill=white,scale=0.4] (c7) at (v8){5};
    
    \coordinate (L1) at (0,{-sqrt(3)/2});
    \node[yshift=-7mm] at (L1) {Class 2};    
    
\end{tikzpicture}
\begin{tikzpicture}[scale=1]

    \coordinate (v1) at (1,0);
    \coordinate (v2) at ({sqrt(2)/2},{sqrt(2)/2});
    \coordinate (v3) at (0,1);
    \coordinate (v4) at ({-sqrt(2)/2},{sqrt(2)/2});
    \coordinate (v5) at (-1,0);
    \coordinate (v6) at ({-sqrt(2)/2},{-sqrt(2)/2});
    \coordinate (v7) at (0,-1);
    \coordinate (v8) at ({sqrt(2)/2},{-sqrt(2)/2});

    \begin{scope}[dashed]
    \draw (v1)--(v8);
    \draw (v3)--(v8);
    \draw (v3)--(v7);
    \draw (v3)--(v6);
    \draw (v4)--(v6);
    \draw (v2)--(v8);
    \end{scope}
    
    \begin{scope}[thick]
    \draw (v1)--(v2);
    \draw (v2)--(v3);
    \draw (v3)--(v4);
    \draw (v4)--(v5);
    \draw (v5)--(v6);
    \draw (v6)--(v7);
    \draw (v7)--(v8);
    \draw (v8)--(v1);
    \draw (v1)--(v3);
    \draw (v1)--(v4);
    \draw (v1)--(v5);
    \draw (v1)--(v6);
    \draw (v6)--(v8);
    \end{scope}
    
    \node[circle,draw,fill=white,scale=0.4] (c1) at (v1){6};
    \node[circle,draw,fill=white,scale=0.4] (c2) at (v2){3};
    \node[circle,draw,fill=white,scale=0.4] (c3) at (v3){6};
    \node[circle,draw,fill=white,scale=0.4] (c4) at (v4){4};
    \node[circle,draw,fill=white,scale=0.4] (c5) at (v5){3};
    \node[circle,draw,fill=white,scale=0.4] (c6) at (v6){6};
    \node[circle,draw,fill=white,scale=0.4] (c7) at (v7){3};
    \node[circle,draw,fill=white,scale=0.4] (c7) at (v8){5};
    
    \coordinate (L1) at (0,{-sqrt(3)/2});
    \node[yshift=-7mm] at (L1) {Class 3};   
    
\end{tikzpicture}    
\begin{tikzpicture}[scale=1]
    
    \coordinate (v1) at (1,0);
    \coordinate (v2) at ({sqrt(2)/2},{sqrt(2)/2});
    \coordinate (v3) at (0,1);
    \coordinate (v4) at ({-sqrt(2)/2},{sqrt(2)/2});
    \coordinate (v5) at (-1,0);
    \coordinate (v6) at ({-sqrt(2)/2},{-sqrt(2)/2});
    \coordinate (v7) at (0,-1);
    \coordinate (v8) at ({sqrt(2)/2},{-sqrt(2)/2});

  \begin{scope}[dashed]
    \draw (v2)--(v4);
    \draw (v1)--(v7);
    \draw (v2)--(v7);
    \draw (v4)--(v7);
    \draw (v5)--(v7);
    \end{scope}
        
    \begin{scope}[thick]
    \draw (v1)--(v2);
    \draw (v1)--(v3);
    \draw (v1)--(v8);
    \draw (v2)--(v3);
    \draw (v3)--(v4);
    \draw (v4)--(v5);
    \draw (v5)--(v6);
    \draw (v6)--(v7);
    \draw (v7)--(v8);
    \draw (v3)--(v5);
    \draw (v3)--(v6);
    \draw (v3)--(v7);
    \draw (v3)--(v8);
    \end{scope}
    
    \node[circle,draw,fill=white,scale=0.4] (c1) at (v1){4};
    \node[circle,draw,fill=white,scale=0.4] (c2) at (v2){4};
    \node[circle,draw,fill=white,scale=0.4] (c3) at (v3){7};
    \node[circle,draw,fill=white,scale=0.4] (c4) at (v4){4};
    \node[circle,draw,fill=white,scale=0.4] (c5) at (v5){4};
    \node[circle,draw,fill=white,scale=0.4] (c6) at (v6){3};
    \node[circle,draw,fill=white,scale=0.4] (c7) at (v7){7};
    \node[circle,draw,fill=white,scale=0.4] (c7) at (v8){3};
    
    \coordinate (L1) at (0,{-sqrt(3)/2});
    \node[yshift=-7mm] at (L1) {Class 4};    
    
\end{tikzpicture}
\begin{tikzpicture}[scale=1]
    
    \coordinate (v1) at (1,0);
    \coordinate (v2) at ({sqrt(2)/2},{sqrt(2)/2});
    \coordinate (v3) at (0,1);
    \coordinate (v4) at ({-sqrt(2)/2},{sqrt(2)/2});
    \coordinate (v5) at (-1,0);
    \coordinate (v6) at ({-sqrt(2)/2},{-sqrt(2)/2});
    \coordinate (v7) at (0,-1);
    \coordinate (v8) at ({sqrt(2)/2},{-sqrt(2)/2});

    \begin{scope}[dashed]
    \draw (v2)--(v4);
    \draw (v2)--(v5);
    \draw (v5)--(v1);
    \draw (v5)--(v8);
    \draw (v6)--(v8);
    \end{scope}
    
        \begin{scope}[thick]
    \draw (v1)--(v2);
    \draw (v2)--(v3);
    \draw (v3)--(v4);
    \draw (v4)--(v5);
    \draw (v5)--(v6);
    \draw (v6)--(v7);
    \draw (v7)--(v8);
    \draw (v8)--(v1);
    \draw (v3)--(v5);
    \draw (v3)--(v6);
    \draw (v3)--(v7);
    \draw (v3)--(v8);
    \draw (v3)--(v1);
    \end{scope}
     
    \node[circle,draw,fill=white,scale=0.4] (c1) at (v1){4};
    \node[circle,draw,fill=white,scale=0.4] (c2) at (v2){4};
    \node[circle,draw,fill=white,scale=0.4] (c3) at (v3){7};
    \node[circle,draw,fill=white,scale=0.4] (c4) at (v4){3};
    \node[circle,draw,fill=white,scale=0.4] (c5) at (v5){6};
    \node[circle,draw,fill=white,scale=0.4] (c6) at (v6){4};
    \node[circle,draw,fill=white,scale=0.4] (c7) at (v7){3};
    \node[circle,draw,fill=white,scale=0.4] (c7) at (v8){5};
    
    \coordinate (L1) at (0,{-sqrt(3)/2});
    \node[yshift=-7mm] at (L1) {Class 5}; 
\end{tikzpicture}
\begin{tikzpicture}[scale=1]
    
    \coordinate (v1) at (1,0);
    \coordinate (v2) at ({sqrt(2)/2},{sqrt(2)/2});
    \coordinate (v3) at (0,1);
    \coordinate (v4) at ({-sqrt(2)/2},{sqrt(2)/2});
    \coordinate (v5) at (-1,0);
    \coordinate (v6) at ({-sqrt(2)/2},{-sqrt(2)/2});
    \coordinate (v7) at (0,-1);
    \coordinate (v8) at ({sqrt(2)/2},{-sqrt(2)/2});

    \begin{scope}[dashed]
    \draw (v2)--(v4);
    \draw (v2)--(v7);
    \draw (v2)--(v8);
    \draw (v4)--(v6);
    \draw (v4)--(v7);
    \end{scope}
        
    \begin{scope}[thick]
    \draw (v1)--(v2);
    \draw (v2)--(v3);
    \draw (v3)--(v4);
    \draw (v4)--(v5);
    \draw (v5)--(v6);
    \draw (v6)--(v7);
    \draw (v7)--(v8);
    \draw (v8)--(v1);
    \draw (v3)--(v1);
    \draw (v3)--(v5);
    \draw (v3)--(v6);
    \draw (v3)--(v7);
    \draw (v3)--(v8);
    \end{scope}
    
    \node[circle,draw,fill=white,scale=0.4] (c1) at (v1){3};
    \node[circle,draw,fill=white,scale=0.4] (c2) at (v2){5};
    \node[circle,draw,fill=white,scale=0.4] (c3) at (v3){7};
    \node[circle,draw,fill=white,scale=0.4] (c4) at (v4){5};
    \node[circle,draw,fill=white,scale=0.4] (c5) at (v5){3};
    \node[circle,draw,fill=white,scale=0.4] (c6) at (v6){4};
    \node[circle,draw,fill=white,scale=0.4] (c7) at (v7){5};
    \node[circle,draw,fill=white,scale=0.4] (c7) at (v8){4};
    
    \coordinate (L1) at (0,{-sqrt(3)/2});
    \node[yshift=-7mm] at (L1) {Class 6};    
    
\end{tikzpicture}
\begin{tikzpicture}[scale=1]
    
    \coordinate (v1) at (1,0);
    \coordinate (v2) at ({sqrt(2)/2},{sqrt(2)/2});
    \coordinate (v3) at (0,1);
    \coordinate (v4) at ({-sqrt(2)/2},{sqrt(2)/2});
    \coordinate (v5) at (-1,0);
    \coordinate (v6) at ({-sqrt(2)/2},{-sqrt(2)/2});
    \coordinate (v7) at (0,-1);
    \coordinate (v8) at ({sqrt(2)/2},{-sqrt(2)/2});

 \begin{scope}[dashed]
    \draw (v2)--(v4);
    \draw (v4)--(v1);
    \draw (v1)--(v5);
    \draw (v5)--(v8);
    \draw (v8)--(v6);
    \end{scope}
    
    \begin{scope}[thick]
    \draw (v1)--(v2);
    \draw (v2)--(v3);
    \draw (v3)--(v4);
    \draw (v4)--(v5);
    \draw (v5)--(v6);
    \draw (v6)--(v7);
    \draw (v7)--(v8);
    \draw (v8)--(v1);
    \draw (v1)--(v3);
    \draw (v3)--(v5);
    \draw (v5)--(v7);
    \draw (v7)--(v1);
    \draw (v3)--(v7);
    \end{scope}
    
    \node[circle,draw,fill=white,scale=0.4] (c1) at (v1){6};
    \node[circle,draw,fill=white,scale=0.4] (c2) at (v2){3};
    \node[circle,draw,fill=white,scale=0.4] (c3) at (v3){5};
    \node[circle,draw,fill=white,scale=0.4] (c4) at (v4){4};
    \node[circle,draw,fill=white,scale=0.4] (c5) at (v5){6};
    \node[circle,draw,fill=white,scale=0.4] (c6) at (v6){3};
    \node[circle,draw,fill=white,scale=0.4] (c7) at (v7){5};
    \node[circle,draw,fill=white,scale=0.4] (c7) at (v8){4};
    
    \coordinate (L1) at (0,{-sqrt(3)/2});
    \node[yshift=-7mm] at (L1) {Class 7};    
    
\end{tikzpicture}
\end{center}

\begin{center}
\begin{tikzpicture}[scale=1]

    \coordinate (v1) at (0.9239795325,0.3826834324);
    \coordinate (v2) at (0.3826834324,0.9239795325);
    \coordinate (v3) at (-0.3826834324,0.9239795325);
    \coordinate (v4) at (-0.9239795325,0.3826834324);
    \coordinate (v5) at (-0.9239795325,-0.3826834324);
    \coordinate (v6) at (-0.3826834324,-0.9239795325);
    \coordinate (v7) at (0.3826834324,-0.9239795325);
    \coordinate (v8) at (0.9239795325,-0.3826834324);  

    \begin{scope}[dashed]
    \draw (v1)--(v7);
    \draw (v2)--(v6);
    \draw (v2)--(v7);
    \draw (v3)--(v5);
    \draw (v3)--(v6);
    \end{scope}
    
    \begin{scope}[thick]
    \draw (v1)--(v2);
    \draw (v2)--(v3);
    \draw (v3)--(v4);
    \draw (v4)--(v5);
    \draw (v5)--(v6);
    \draw (v6)--(v7);
    \draw (v7)--(v8);
    \draw (v8)--(v1);
    \draw (v1)--(v3);
    \draw (v1)--(v4);
    \draw (v1)--(v5);
    \draw (v5)--(v7);
    \draw (v5)--(v8);
    \end{scope}
    
    \node[circle,draw,fill=white,scale=0.4] (c1) at (v1){6};
    \node[circle,draw,fill=white,scale=0.4] (c2) at (v2){4};
    \node[circle,draw,fill=white,scale=0.4] (c3) at (v3){5};
    \node[circle,draw,fill=white,scale=0.4] (c4) at (v4){3};
    \node[circle,draw,fill=white,scale=0.4] (c5) at (v5){6};
    \node[circle,draw,fill=white,scale=0.4] (c6) at (v6){4};
    \node[circle,draw,fill=white,scale=0.4] (c7) at (v7){5};
    \node[circle,draw,fill=white,scale=0.4] (c7) at (v8){3};
    
    \coordinate (L1) at (0,{-sqrt(3)/2});
    \node[yshift=-7mm] at (L1) {Class 8};    
    
\end{tikzpicture}
  \begin{tikzpicture}[scale=1]
      
    \coordinate (v1) at (1,0);
    \coordinate (v2) at ({sqrt(2)/2},{sqrt(2)/2});
    \coordinate (v3) at (0,1);
    \coordinate (v4) at ({-sqrt(2)/2},{sqrt(2)/2});
    \coordinate (v5) at (-1,0);
    \coordinate (v6) at ({-sqrt(2)/2},{-sqrt(2)/2});
    \coordinate (v7) at (0,-1);
    \coordinate (v8) at ({sqrt(2)/2},{-sqrt(2)/2});

    \begin{scope}[dashed]
      \draw (v2)--(v8);
      \draw (v4)--(v6);
      \draw (v3)--(v7);
      \draw (v4)--(v7);
      \draw (v7)--(v2);
      \end{scope}
      
      \begin{scope}[thick]
      \draw (v1)--(v2);
      \draw (v2)--(v3);
      \draw (v3)--(v4);
      \draw (v4)--(v5);
      \draw (v5)--(v6);
      \draw (v6)--(v7);
      \draw (v7)--(v8);
      \draw (v8)--(v1);
      \draw (v4)--(v2);
      \draw (v2)--(v5);
      \draw (v5)--(v1);
      \draw (v5)--(v8);
      \draw (v8)--(v6);
      \end{scope}
      
     \node[circle,draw,fill=white,scale=0.4] (c1) at (v1){3};
    \node[circle,draw,fill=white,scale=0.4] (c2) at (v2){6};
    \node[circle,draw,fill=white,scale=0.4] (c3) at (v3){3};
    \node[circle,draw,fill=white,scale=0.4] (c4) at (v4){5};
    \node[circle,draw,fill=white,scale=0.4] (c5) at (v5){5};
    \node[circle,draw,fill=white,scale=0.4] (c6) at (v6){4};
    \node[circle,draw,fill=white,scale=0.4] (c7) at (v7){5};
    \node[circle,draw,fill=white,scale=0.4] (c7) at (v8){5};
      
    \coordinate (L1) at (0,{-sqrt(3)/2});
    \node[yshift=-7mm] at (L1) {Class 9};
      
  \end{tikzpicture}
\begin{tikzpicture}[scale=1]
    
    \coordinate (v1) at (0.9239795325,0.3826834324);
    \coordinate (v2) at (0.3826834324,0.9239795325);
    \coordinate (v3) at (-0.3826834324,0.9239795325);
    \coordinate (v4) at (-0.9239795325,0.3826834324);
    \coordinate (v5) at (-0.9239795325,-0.3826834324);
    \coordinate (v6) at (-0.3826834324,-0.9239795325);
    \coordinate (v7) at (0.3826834324,-0.9239795325);
    \coordinate (v8) at (0.9239795325,-0.3826834324); 

      \begin{scope}[dashed]
    \draw (v2)--(v6);
    \draw (v2)--(v7);
    \draw (v2)--(v8);
    \draw (v3)--(v5);
    \draw (v3)--(v6);
    \end{scope}
    
    \begin{scope}[thick]
    \draw (v1)--(v2);
    \draw (v2)--(v3);
    \draw (v3)--(v4);
    \draw (v4)--(v5);
    \draw (v5)--(v6);
    \draw (v6)--(v7);
    \draw (v7)--(v8);
    \draw (v8)--(v1);
    \draw (v1)--(v3);
    \draw (v1)--(v4);
    \draw (v1)--(v5);
    \draw (v5)--(v8);
    \draw (v6)--(v8);
    \end{scope}
    
    \node[circle,draw,fill=white,scale=0.4] (c1) at (v1){5};
    \node[circle,draw,fill=white,scale=0.4] (c2) at (v2){5};
    \node[circle,draw,fill=white,scale=0.4] (c3) at (v3){5};
    \node[circle,draw,fill=white,scale=0.4] (c4) at (v4){3};
    \node[circle,draw,fill=white,scale=0.4] (c5) at (v5){5};
    \node[circle,draw,fill=white,scale=0.4] (c6) at (v6){5};
    \node[circle,draw,fill=white,scale=0.4] (c7) at (v7){3};
    \node[circle,draw,fill=white,scale=0.4] (c7) at (v8){5};
    
    \coordinate (L1) at (0,{-sqrt(3)/2});
    \node[yshift=-7mm] at (L1) {Class 10};    
    
\end{tikzpicture}
\begin{tikzpicture}[scale=1]
   
   \coordinate (v1) at (1,0);
    \coordinate (v2) at ({sqrt(2)/2},{sqrt(2)/2});
    \coordinate (v3) at (0,1);
    \coordinate (v4) at ({-sqrt(2)/2},{sqrt(2)/2});
    \coordinate (v5) at (-1,0);
    \coordinate (v6) at ({-sqrt(2)/2},{-sqrt(2)/2});
    \coordinate (v7) at (0,-1);
    \coordinate (v8) at ({sqrt(2)/2},{-sqrt(2)/2});
    
    \begin{scope}[thick]
    \draw (v1)--(v2);
    \draw (v2)--(v3);
    \draw (v3)--(v4);    
    \draw (v4)--(v5);    
    \draw (v5)--(v6);
    \draw (v6)--(v7);
    \draw (v7)--(v8);
    \draw (v8)--(v1);
    \draw (v4)--(v6);
    \draw (v6)--(v3);
    \draw (v3)--(v7);
    \draw (v7)--(v2);
    \draw (v2)--(v8);
    \end{scope}
   
   \begin{scope}[dashed]
   \draw (v1)--(v3);
   \draw (v1)--(v4);
   \draw (v1)--(v5);
   \draw (v1)--(v6);
   \draw (v6)--(v8);
   \end{scope}

    \node[circle,draw,fill=white,scale=0.4] (c1) at (v1){6};
    \node[circle,draw,fill=white,scale=0.4] (c2) at (v2){4};
    \node[circle,draw,fill=white,scale=0.4] (c3) at (v3){5};
    \node[circle,draw,fill=white,scale=0.4] (c4) at (v4){4};
    \node[circle,draw,fill=white,scale=0.4] (c5) at (v5){3};
    \node[circle,draw,fill=white,scale=0.4] (c6) at (v6){6};
    \node[circle,draw,fill=white,scale=0.4] (c7) at (v7){4};
    \node[circle,draw,fill=white,scale=0.4] (c7) at (v8){4};
    
    \coordinate (L1) at (0,{-sqrt(3)/2});
    \node[yshift=-7mm] at (L1) {Class 11};

\end{tikzpicture}
\begin{tikzpicture}[scale=1]

    \coordinate (v1) at (0.9239795325,0.3826834324);
    \coordinate (v2) at (0.3826834324,0.9239795325);
    \coordinate (v3) at (-0.3826834324,0.9239795325);
    \coordinate (v4) at (-0.9239795325,0.3826834324);
    \coordinate (v5) at (-0.9239795325,-0.3826834324);
    \coordinate (v6) at (-0.3826834324,-0.9239795325);
    \coordinate (v7) at (0.3826834324,-0.9239795325);
    \coordinate (v8) at (0.9239795325,-0.3826834324);
    
    \begin{scope}[thick]
    \draw (v1)--(v2);
    \draw (v2)--(v3);
    \draw (v3)--(v4);
    \draw (v4)--(v5);
    \draw (v5)--(v6);
    \draw (v6)--(v7);
    \draw (v7)--(v8);
    \draw (v8)--(v1);
    \draw (v1)--(v3);
    \draw (v1)--(v4);
    \draw (v1)--(v5);
    \draw (v5)--(v8);
    \draw (v6)--(v8);
    \end{scope}
    
    \begin{scope}[dashed]
    \draw (v1)--(v7);
    \draw (v2)--(v6);
    \draw (v2)--(v7);
    \draw (v3)--(v5);
    \draw (v3)--(v6);
    \end{scope}
    
    \node[circle,draw,fill=white,scale=0.4] (c1) at (v1){6};
    \node[circle,draw,fill=white,scale=0.4] (c2) at (v2){4};
    \node[circle,draw,fill=white,scale=0.4] (c3) at (v3){5};
    \node[circle,draw,fill=white,scale=0.4] (c4) at (v4){3};
    \node[circle,draw,fill=white,scale=0.4] (c5) at (v5){5};
    \node[circle,draw,fill=white,scale=0.4] (c6) at (v6){5};
    \node[circle,draw,fill=white,scale=0.4] (c7) at (v7){4};
    \node[circle,draw,fill=white,scale=0.4] (c7) at (v8){4};
    
    \coordinate (L1) at (0,{-sqrt(3)/2});
    \node[yshift=-7mm] at (L1) {Class 12};    
    
\end{tikzpicture}
\begin{tikzpicture}[scale=1]
  
    \coordinate (v1) at (0.9239795325,0.3826834324);
    \coordinate (v2) at (0.3826834324,0.9239795325);
    \coordinate (v3) at (-0.3826834324,0.9239795325);
    \coordinate (v4) at (-0.9239795325,0.3826834324);
    \coordinate (v5) at (-0.9239795325,-0.3826834324);
    \coordinate (v6) at (-0.3826834324,-0.9239795325);
    \coordinate (v7) at (0.3826834324,-0.9239795325);
    \coordinate (v8) at (0.9239795325,-0.3826834324); 
  
    \begin{scope}[thick]
    \draw (v1)--(v2);
    \draw (v2)--(v3);
    \draw (v3)--(v4);
    \draw (v4)--(v5);
    \draw (v5)--(v6);
    \draw (v6)--(v7);
    \draw (v7)--(v8);
    \draw (v8)--(v1);
    \draw (v1)--(v3);
    \draw (v1)--(v4);
    \draw (v1)--(v5);
    \draw (v5)--(v8);
    \draw (v8)--(v6);
    \end{scope}
    
    \begin{scope}[dashed]
    \draw (v1)--(v7);
    \draw (v7)--(v2);
    \draw (v2)--(v6);
    \draw (v6)--(v3);
    \draw (v4)--(v6);
    \end{scope}
    
    \node[circle,draw,fill=white,scale=0.4] (c1) at (v1){6};
    \node[circle,draw,fill=white,scale=0.4] (c2) at (v2){4};
    \node[circle,draw,fill=white,scale=0.4] (c3) at (v3){4};
    \node[circle,draw,fill=white,scale=0.4] (c4) at (v4){4};
    \node[circle,draw,fill=white,scale=0.4] (c5) at (v5){4};
    \node[circle,draw,fill=white,scale=0.4] (c6) at (v6){6};
    \node[circle,draw,fill=white,scale=0.4] (c7) at (v7){4};
    \node[circle,draw,fill=white,scale=0.4] (c7) at (v8){4};
    
    \coordinate (L1) at (0,{-sqrt(3)/2});
    \node[yshift=-7mm] at (L1) {Class 13};

\end{tikzpicture}
\begin{tikzpicture}[scale=1]
    
    \coordinate (v1) at (0.9239795325,0.3826834324);
    \coordinate (v2) at (0.3826834324,0.9239795325);
    \coordinate (v3) at (-0.3826834324,0.9239795325);
    \coordinate (v4) at (-0.9239795325,0.3826834324);
    \coordinate (v5) at (-0.9239795325,-0.3826834324);
    \coordinate (v6) at (-0.3826834324,-0.9239795325);
    \coordinate (v7) at (0.3826834324,-0.9239795325);
    \coordinate (v8) at (0.9239795325,-0.3826834324); 
        
    \begin{scope}[thick]
    \draw (v1)--(v2);
    \draw (v2)--(v3);
    \draw (v3)--(v4);
    \draw (v4)--(v5);
    \draw (v5)--(v6);
    \draw (v6)--(v7);
    \draw (v7)--(v8);
    \draw (v8)--(v1);
    \draw (v1)--(v3);
    \draw (v1)--(v4);
    \draw (v1)--(v5);
    \draw (v5)--(v7);
    \draw (v5)--(v8);
    \end{scope}
    
       \begin{scope}[dashed]
    \draw (v2)--(v7);
    \draw (v2)--(v8);
    \draw (v3)--(v6);
    \draw (v3)--(v7);
    \draw (v4)--(v6);
    \end{scope}
    
     \node[circle,draw,fill=white,scale=0.4] (c1) at (v1){5};
    \node[circle,draw,fill=white,scale=0.4] (c2) at (v2){4};
    \node[circle,draw,fill=white,scale=0.4] (c3) at (v3){5};
    \node[circle,draw,fill=white,scale=0.4] (c4) at (v4){4};
    \node[circle,draw,fill=white,scale=0.4] (c5) at (v5){5};
    \node[circle,draw,fill=white,scale=0.4] (c6) at (v6){4};
    \node[circle,draw,fill=white,scale=0.4] (c7) at (v7){5};
    \node[circle,draw,fill=white,scale=0.4] (c7) at (v8){4};
    
    \coordinate (L1) at (0,{-sqrt(3)/2});
    \node[yshift=-7mm] at (L1) {Class 14};    
    
\end{tikzpicture}
    \end{center}
    %%%%%%%%%%%%%%%%%%%%%%%%%%

We will show that most of the 14 simplicial classes do not admit an inscribable  realization which is equifacetal; for those that do, we compute  their surface areas directly, and at the end of the proof we select the largest value. We begin with the cases in which each facet is isosceles, and at the end we consider the potential cases  where each facet is equilateral.  Please note that each of the possible 2-colorings from Classes 2 through 7, as well as those of Classes 9 and 11, are ruled out by Defect A or Defect B.

%%%%%%%%%%%%%%%%%%%%%%%%%
\subsection{The cases of isosceles facets}

\subsubsection{Class 1}

The \emph{triakis tetrahedron} is a member of Class 1. It is constructed by gluing a tetrahedron to each facet of a tetrahedron. It is not \emph{inscribable}, meaning there is no combinatorially equivalent polytope whose vertices all lie in the sphere (see, for example, \cite{Padrol2016}).  Hence the surface area  maximizer cannot lie in Class 1. 

%%%%%%%%%%%%%%%%%%%%%%%%%%%%%%%%%%%
%%%%%%%%%%%%%%%%%%%%%%%%%%%%%%%%%%% Class Colorings
%%%%%%%%%%%%%%%%%%%%%%%%%%%%%%%%%%%
%\subsubsection{Edge Colorings for Theorem \ref{mainThm}}

%In this section, we analyze the plausible colorings which do not exhibit any of the defects listed in Section XX.  Each of the possible 2-colorings from Classes 1 through 7 and Classes 9 and 11 are ruled out by Defect A or Defect B.

%%%%%%%%%%%%%
%%%%%%%%%%%%% Class 8
\subsubsection{Class 8}
Lemma \ref{rule-out-lemma} provides two sets of antipodal vertices, shaded green and yellow in the figure below. 
\begin{center}
\begin{tikzpicture}[scale=1]

    \coordinate (v1) at (0.9239795325,0.3826834324);
    \coordinate (v2) at (0.3826834324,0.9239795325);
    \coordinate (v3) at (-0.3826834324,0.9239795325);
    \coordinate (v4) at (-0.9239795325,0.3826834324);
    \coordinate (v5) at (-0.9239795325,-0.3826834324);
    \coordinate (v6) at (-0.3826834324,-0.9239795325);
    \coordinate (v7) at (0.3826834324,-0.9239795325);
    \coordinate (v8) at (0.9239795325,-0.3826834324);  

    \begin{scope}[dashed,thick]
    \draw[red] (v1)--(v7);
    \draw[red] (v2)--(v6);
    \draw[blue] (v2)--(v7);
    \draw[red] (v3)--(v5);
    \draw[blue] (v3)--(v6);
    \end{scope}
    
    \begin{scope}[thick]
    \draw[blue] (v1)--(v2);
    \draw[blue] (v2)--(v3);
    \draw[blue] (v3)--(v4);
    \draw[blue] (v4)--(v5);
    \draw[blue] (v5)--(v6);
    \draw[blue] (v6)--(v7);
    \draw[blue] (v7)--(v8);
    \draw[blue] (v8)--(v1);
    \draw[red] (v1)--(v3);
    \draw[blue] (v1)--(v4);
    \draw[red] (v1)--(v5);
    \draw[red] (v5)--(v7);
    \draw[blue] (v5)--(v8);
    \end{scope}
    
    \node[circle,draw,fill=green,scale=0.4] (c1) at (v1){$E$};
    \node[circle,draw,fill=yellow,scale=0.4] (c2) at (v2){$D$};
    \node[circle,draw,fill=white,scale=0.4] (c3) at (v3){$C$};
    \node[circle,draw,fill=white,scale=0.4] (c4) at (v4){$B$};
    \node[circle,draw,fill=yellow,scale=0.4] (c5) at (v5){$A$};
    \node[circle,draw,fill=green,scale=0.4] (c6) at (v6){$H$};
    \node[circle,draw,fill=white,scale=0.4] (c7) at (v7){$G$};
    \node[circle,draw,fill=white,scale=0.4] (c7) at (v8){$F$};
    
    \coordinate (L1) at (0,{-sqrt(3)/2});
    \node[yshift=-7mm] at (L1) {Class 8};    
\end{tikzpicture}
\end{center}

The vertices $A,D,E$ and $H$ lie in the same great circle and form a rectangle, with $|\overline{AE}|=a$ and $|\overline{AH}|=b$.  The triangles $\triangle(ABE), \triangle(AEF), \triangle(CDH)$ and $\triangle(DGH)$ are congruent isosceles triangles with $|\overline{BC}|=|\overline{FG}|=b$. Thus it must be that the vertices $A,B,E$ and $F$ coplanar, and the vertices $C,D,G$ and $H$ must be coplanar as well, which is impossible.

%%%%%%%%%%%%%%%%%%%%%%%%%%%%
%%%%%%%%%%%%%%%%%%%%%%%%%%%% Class 10

\subsubsection{Class 10}
None of the coloring defects are present in the coloring below. 

\begin{center}
    \begin{tikzpicture}[scale=1]
    
    \coordinate (v1) at (0.9239795325,0.3826834324);
    \coordinate (v2) at (0.3826834324,0.9239795325);
    \coordinate (v3) at (-0.3826834324,0.9239795325);
    \coordinate (v4) at (-0.9239795325,0.3826834324);
    \coordinate (v5) at (-0.9239795325,-0.3826834324);
    \coordinate (v6) at (-0.3826834324,-0.9239795325);
    \coordinate (v7) at (0.3826834324,-0.9239795325);
    \coordinate (v8) at (0.9239795325,-0.3826834324); 

      \begin{scope}[dashed,thick]
    \draw[red] (v2)--(v6);
    \draw[blue] (v2)--(v7);
    \draw[red] (v2)--(v8);
    \draw[red] (v3)--(v5);
    \draw[blue] (v3)--(v6);
    \end{scope}
    
    \begin{scope}[thick]
    \draw[blue] (v1)--(v2);
    \draw[blue] (v2)--(v3);
    \draw[blue] (v3)--(v4);
    \draw[blue] (v4)--(v5);
    \draw[blue] (v5)--(v6);
    \draw[blue] (v6)--(v7);
    \draw[blue] (v7)--(v8);
    \draw[blue] (v8)--(v1);
    \draw[red] (v1)--(v3);
    \draw[blue] (v1)--(v4);
    \draw[red] (v1)--(v5);
    \draw[blue] (v5)--(v8);
    \draw[red] (v6)--(v8);
    \end{scope}
    
    \node[circle,draw,fill=white,scale=0.4] (c1) at (v1){$E$};
    \node[circle,draw,fill=white,scale=0.4] (c2) at (v2){$D$};
    \node[circle,draw,fill=white,scale=0.4] (c3) at (v3){$C$};
    \node[circle,draw,fill=white,scale=0.4] (c4) at (v4){$B$};
    \node[circle,draw,fill=white,scale=0.4] (c5) at (v5){$A$};
    \node[circle,draw,fill=white,scale=0.4] (c6) at (v6){$H$};
    \node[circle,draw,fill=white,scale=0.4] (c7) at (v7){$G$};
    \node[circle,draw,fill=white,scale=0.4] (c7) at (v8){$F$};
    
    \coordinate (L1) at (0,{-sqrt(3)/2});
    \node[yshift=-7mm] at (L1) {Class 10};    
\end{tikzpicture}
\end{center}

Let $b=|\overline{AB}|$ and $a=|\overline{AE}|$.  Without loss of generality, we may assume that $B=e_3$ and that the plane containing the equilateral triangle $\triangle(ACE)$ is given by $z=h$, where $h\in(-1,1)$.  This allows us to calculate $a=\sqrt{3(1-h^2)}$ and $b=\sqrt{2(1-h)}$.  Since the triangles $\triangle(ACH), \triangle(CDE)$ and $\triangle(AFE)$ are isosceles, the plane containing $\triangle(DFH)$ is parallel to the plane containing $\triangle(ACE)$.  Since $\triangle(ACE)$ and $\triangle(DFH)$ are congruent, the plane containing $\triangle(DFH)$ is given by $z=-h$, and $G=-e_3$.  Furthermore, the congruency of the triangles in between forces the two equilateral triangles to be separated by distance $\pi/3$.  This allows us to calculate $b=|\overline{DE}|=\sqrt{1+3h^2}$. Setting this equal to the previous expression yields $2(1-h)=1+3h^2$, which has one feasible solution $h=1/3$.  Now we can calculate the surface area of this configuration:
\[
6a\sqrt{b^2-a^2/4}=8.
\]

%%%%%%%%%%%%%%%%%%%%%
%%%%%%%%%%%%%%%%%%%%% Class 12

\subsubsection{Class 12}  Lemma \ref{rule-out-lemma} provides a pair of antipodal points shaded green in the figure below.
\begin{center}
    \begin{tikzpicture}[scale=1]

    \coordinate (v1) at (0.9239795325,0.3826834324);
    \coordinate (v2) at (0.3826834324,0.9239795325);
    \coordinate (v3) at (-0.3826834324,0.9239795325);
    \coordinate (v4) at (-0.9239795325,0.3826834324);
    \coordinate (v5) at (-0.9239795325,-0.3826834324);
    \coordinate (v6) at (-0.3826834324,-0.9239795325);
    \coordinate (v7) at (0.3826834324,-0.9239795325);
    \coordinate (v8) at (0.9239795325,-0.3826834324);
    
    \begin{scope}[thick]
    \draw[blue] (v1)--(v2);
    \draw[blue] (v2)--(v3);
    \draw[blue] (v3)--(v4);
    \draw[blue] (v4)--(v5);
    \draw[blue] (v5)--(v6);
    \draw[blue] (v6)--(v7);
    \draw[blue] (v7)--(v8);
    \draw[blue] (v8)--(v1);
    \draw[red] (v1)--(v3);
    \draw[blue] (v1)--(v4);
    \draw[red] (v1)--(v5);
    \draw[blue] (v5)--(v8);
    \draw[red] (v6)--(v8);
    \end{scope}
    
    \begin{scope}[dashed]
    \draw[red] (v1)--(v7);
    \draw[red] (v2)--(v6);
    \draw[blue] (v2)--(v7);
    \draw[red] (v3)--(v5);
    \draw[blue] (v3)--(v6);
    \end{scope}
    
    \node[circle,draw,fill=green,scale=0.4] (c1) at (v1){$E$};
    \node[circle,draw,fill=white,scale=0.4] (c2) at (v2){$D$};
    \node[circle,draw,fill=white,scale=0.4] (c3) at (v3){$C$};
    \node[circle,draw,fill=white,scale=0.4] (c4) at (v4){$B$};
    \node[circle,draw,fill=white,scale=0.4] (c5) at (v5){$A$};
    \node[circle,draw,fill=green,scale=0.4] (c6) at (v6){$H$};
    \node[circle,draw,fill=white,scale=0.4] (c7) at (v7){$G$};
    \node[circle,draw,fill=white,scale=0.4] (c7) at (v8){$F$};
    
    \coordinate (L1) at (0,{-sqrt(3)/2});
    \node[yshift=-7mm] at (L1) {Class 12};    
    
\end{tikzpicture}
\end{center}

The segment $\overline{HE}$ is orthogonal to the plane containing the points $A,C$ and $G$.  Letting $|\overline{AE}|=a$ and $|\overline{AH}|=b$, we have $a^2+b^2=4$.  Without loss of generality, we may assume that $B=e_3$ and that the plane containing the points $A,C$ and $E$ is given by $z=h$, where $h\in(-1,1)$.  This allows us to write $b=\sqrt{2(1-h)}$ and $a=\sqrt{3(1-h^2)}$.  Hence we have
\[
0=3(1-h^2)+2(1-h)-4=-3h^2-2h+1=(1+h)(1-3h),
\]
whose only feasible solution is $h=1/3$, yielding  $a=\sqrt{8/3}$ and $b=\sqrt{4/3}$.  However, this solution is extraneous. To see this, note that the plane containing $D,F$ and $H$ is parallel to the plane containing $A,C$ and $E$.  This forces $\triangle(DFH)$ to be equilateral, so that $|\overline{DF}|=a$.  Since $G$ is equidistant from $D,F$ and $H$, it sits at the apex of the cap cut off by the plane containing these points (which has equation $z=-1/3$). Hence, $\overline{BG}$ is a diameter and  $|\overline{EG}|=|\overline{AG}|=|\overline{CG}|=a$.  Similarly, we obtain  $|\overline{BD}|=|\overline{BF}|=|\overline{BH}|=a$.  The vertices $B,E,G$ and $H$ lie on the same great circle and form a rectangle, with $C$ and $D$ lying in one hemisphere and $A$ and $F$ in the other.  Since the triangles $\triangle(EFG), \triangle(DEG), \triangle(ABH)$ and $\triangle(BCH)$ are congruent, $\overline{AC}$ crosses the $BEGH$ plane, and $b=|\overline{AF}|=|\overline{CD}|$, we cannot simultaneously preserve the convexity on the $D,E,F,G$ side and on the $A,B,C,H$ side.    

%%%%%%%%%%%%%%%%
%%%%%%%%%%%%%%%% Class 13

\subsubsection{Class 13}  

A member of this class is a hexagonal bipyramid, which is the convex hull of a hexagon and two apexes on opposite sides, so that the segment joining the apexes intersects the relative interior of the hexagon. The  surface area of an inscribed hexagonal bipyramid $P$ satisfies the bound 
\[
S(P) \leq 12\sqrt{1+\cos^2\frac{\pi}{6}}\sin\frac{\pi}{6} = 3\sqrt{7}=7.93725\ldots
\]
with equality if and only if $P$ is the convex hull of a regular hexagon in the equator and the north and south poles (see \cite[Cor. 2]{DHL}). 
%%%%%%%%%%%%%%%%
%%%%%%%%%%%%%%%% Class 14

\subsubsection{Class 14}  
The majority of colorings are ruled out by Defect B, leaving only 3 plausible colorings (shown below) which do not exhibit any of the coloring defects described.  

\begin{center}
    \begin{tikzpicture}[scale=1]
    
    \coordinate (v1) at (0.9239795325,0.3826834324);
    \coordinate (v2) at (0.3826834324,0.9239795325);
    \coordinate (v3) at (-0.3826834324,0.9239795325);
    \coordinate (v4) at (-0.9239795325,0.3826834324);
    \coordinate (v5) at (-0.9239795325,-0.3826834324);
    \coordinate (v6) at (-0.3826834324,-0.9239795325);
    \coordinate (v7) at (0.3826834324,-0.9239795325);
    \coordinate (v8) at (0.9239795325,-0.3826834324); 
        
    \begin{scope}[thick]
    \draw[blue] (v1)--(v2);
    \draw[red] (v2)--(v3);
    \draw[blue] (v3)--(v4);
    \draw[blue] (v4)--(v5);
    \draw[blue] (v5)--(v6);
    \draw[red] (v6)--(v7);
    \draw[blue] (v7)--(v8);
    \draw[blue] (v8)--(v1);
    \draw[blue] (v1)--(v3);
    \draw[red] (v1)--(v4);
    \draw[blue] (v1)--(v5);
    \draw[blue] (v5)--(v7);
    \draw[red] (v5)--(v8);
    \end{scope}
    
       \begin{scope}[dashed]
    \draw[blue] (v2)--(v7);
    \draw[red] (v2)--(v8);
    \draw[blue] (v3)--(v6);
    \draw[blue] (v3)--(v7);
    \draw[red] (v4)--(v6);
    \end{scope}
    
     \node[circle,draw,fill=green,scale=0.4] (c1) at (v1){$E$};
    \node[circle,draw,fill=white,scale=0.4] (c2) at (v2){$D$};
    \node[circle,draw,fill=yellow,scale=0.4] (c3) at (v3){$C$};
    \node[circle,draw,fill=white,scale=0.4] (c4) at (v4){$B$};
    \node[circle,draw,fill=yellow,scale=0.4] (c5) at (v5){$A$};
    \node[circle,draw,fill=white,scale=0.4] (c6) at (v6){$H$};
    \node[circle,draw,fill=green,scale=0.4] (c7) at (v7){$G$};
    \node[circle,draw,fill=white,scale=0.4] (c7) at (v8){$F$};
    
    \coordinate (L1) at (0,{-sqrt(3)/2});
    \node[yshift=-7mm] at (L1) {Class 14 (i)};    
    
\end{tikzpicture}
\begin{tikzpicture}[scale=1]
    
    \coordinate (v1) at (0.9239795325,0.3826834324);
    \coordinate (v2) at (0.3826834324,0.9239795325);
    \coordinate (v3) at (-0.3826834324,0.9239795325);
    \coordinate (v4) at (-0.9239795325,0.3826834324);
    \coordinate (v5) at (-0.9239795325,-0.3826834324);
    \coordinate (v6) at (-0.3826834324,-0.9239795325);
    \coordinate (v7) at (0.3826834324,-0.9239795325);
    \coordinate (v8) at (0.9239795325,-0.3826834324); 
        
    \begin{scope}[thick]
    \draw[red] (v1)--(v2);
    \draw[blue] (v2)--(v3);
    \draw[red] (v3)--(v4);
    \draw[blue] (v4)--(v5);
    \draw[red] (v5)--(v6);
    \draw[blue] (v6)--(v7);
    \draw[red] (v7)--(v8);
    \draw[blue] (v8)--(v1);
    \draw[blue] (v1)--(v3);
    \draw[blue] (v1)--(v4);
    \draw[red] (v1)--(v5);
    \draw[blue] (v5)--(v7);
    \draw[blue] (v5)--(v8);
    \end{scope}
    
       \begin{scope}[dashed]
    \draw[blue] (v2)--(v7);
    \draw[blue] (v2)--(v8);
    \draw[blue] (v3)--(v6);
    \draw[red] (v3)--(v7);
    \draw[blue] (v4)--(v6);
    \end{scope}
    
     \node[circle,draw,fill=white,scale=0.4] (c1) at (v1){$E$};
    \node[circle,draw,fill=white,scale=0.4] (c2) at (v2){$D$};
    \node[circle,draw,fill=white,scale=0.4] (c3) at (v3){$C$};
    \node[circle,draw,fill=white,scale=0.4] (c4) at (v4){$B$};
    \node[circle,draw,fill=white,scale=0.4] (c5) at (v5){$A$};
    \node[circle,draw,fill=white,scale=0.4] (c6) at (v6){$H$};
    \node[circle,draw,fill=white,scale=0.4] (c7) at (v7){$G$};
    \node[circle,draw,fill=white,scale=0.4] (c7) at (v8){$F$};
    
    \coordinate (L1) at (0,{-sqrt(3)/2});
    \node[yshift=-7mm] at (L1) {Class 14 (ii)};    
    
\end{tikzpicture}
\begin{tikzpicture}[scale=1]
    
    \coordinate (v1) at (0.9239795325,0.3826834324);
    \coordinate (v2) at (0.3826834324,0.9239795325);
    \coordinate (v3) at (-0.3826834324,0.9239795325);
    \coordinate (v4) at (-0.9239795325,0.3826834324);
    \coordinate (v5) at (-0.9239795325,-0.3826834324);
    \coordinate (v6) at (-0.3826834324,-0.9239795325);
    \coordinate (v7) at (0.3826834324,-0.9239795325);
    \coordinate (v8) at (0.9239795325,-0.3826834324); 
        
    \begin{scope}[thick]
    \draw[blue] (v1)--(v2);
    \draw[blue] (v2)--(v3);
    \draw[blue] (v3)--(v4);
    \draw[blue] (v4)--(v5);
    \draw[blue] (v5)--(v6);
    \draw[blue] (v6)--(v7);
    \draw[blue] (v7)--(v8);
    \draw[blue] (v8)--(v1);
    \draw[red] (v1)--(v3);
    \draw[blue] (v1)--(v4);
    \draw[red] (v1)--(v5);
    \draw[red] (v5)--(v7);
    \draw[blue] (v5)--(v8);
    \end{scope}
    
       \begin{scope}[dashed]
    \draw[blue] (v2)--(v7);
    \draw[red] (v2)--(v8);
    \draw[blue] (v3)--(v6);
    \draw[red] (v3)--(v7);
    \draw[red] (v4)--(v6);
    \end{scope}
    
     \node[circle,draw,fill=white,scale=0.4] (c1) at (v1){$E$};
    \node[circle,draw,fill=white,scale=0.4] (c2) at (v2){$D$};
    \node[circle,draw,fill=white,scale=0.4] (c3) at (v3){$C$};
    \node[circle,draw,fill=white,scale=0.4] (c4) at (v4){$B$};
    \node[circle,draw,fill=white,scale=0.4] (c5) at (v5){$A$};
    \node[circle,draw,fill=white,scale=0.4] (c6) at (v6){$H$};
    \node[circle,draw,fill=white,scale=0.4] (c7) at (v7){$G$};
    \node[circle,draw,fill=white,scale=0.4] (c7) at (v8){$F$};
    
    \coordinate (L1) at (0,{-sqrt(3)/2});
    \node[yshift=-7mm] at (L1) {Class 14 (iii)};   
\end{tikzpicture}
\end{center}

\subsubsection{Class 14 (i)}  Lemma \ref{rule-out-lemma} provides two pairs of antipodal vertices,  shaded green and yellow, with diameters $\overline{AC}$ and $\overline{EG}$.  This tells us the blue length, $b=\sqrt{2}$.   The points $A,C,D$ and $F$ are coplanar because they are equidistant from the point $E$.  Moreover, $E$ sits at the apex of the cap cut off by this plane.  The coloring forces congruent central angles between the pairs of vertices $A$ and $F$, $F$ and $D$, and $D$ and $C$.  Since $\overline{AC}$ is a diameter, the shared angle is $\pi/3$, which means that $a=1$.  Thus, the surface area of this configuration equals
\[
12\dfrac{\sqrt{7}}{4}=3\sqrt{7}\approx 7.94.
\]

\subsubsection{Class 14 (ii)}  
The vertex $B$ sits at the apex of the cap cut off by the plane containing vertices $A,E$ and $H$, since these are equidistant from $B$.  Notice that $|\overline{AH}|=|\overline{AE}|=a$ and $|\overline{CE}|=|\overline{CH}|=b$.  This forces the vertices $A,B$ and $C$ to lie in the same great circle.  Consider the sphere centered at $C$ with radius $a$.  It intersects the unit sphere in a circle which contains both $B$ and $G$.  Similarly, the sphere of radius $b$ centered at $A$ intersects the unit sphere in a circle containing $B$ and $G$.  Since $A,B$ and $C$ lie on the same great circle, the only way for this to be true is if $\overline{AC}$ is a diameter.  Similarly, $\overline{EG}$ is a diameter.  This provides $4=a^2+b^2$, as well as $a=|\overline{AD}|=|\overline{CF}|=|\overline{EH}|=|\overline{BG}|$.  The only way that this is possible is for the two sets of vertices $A,D,E $ and $H$ to be coplanar, as well as the vertices $B,C,F$ and $G$.  This forces the polytope to be a parallelepiped, with two rhomboid facets and four rectangular facets.  However, such a configuration is impossible since some of the edges are contained in the relative interiors of some of the facets.  
%%%%%%%%%%%%%%%

\subsubsection{Class 14 (iii)} 

Let the vertices be labeled as follows, where the red edges have length $a$ and the blue edges have length $b$:

\begin{center}
\tdplotsetmaincoords{90}{60}
\def\r{1}
\begin{tikzpicture}[scale=1.5,line join=bevel, tdplot_main_coords]
    \coordinate (O) at (0,0,0);

\coordinate (P1) at ({sin(3*34.6927133)},0,{cos(3*34.6927133)});
\coordinate (P2) at ({sin(34.6927133)},0,{cos(34.6927133)});
\coordinate (P3) at ({-sin(34.6927133)},0,{cos(34.6927133)});
\coordinate (P4) at ({-sin(3*34.6927133)},0,{cos(3*34.6927133)});
\coordinate (P5) at (0,{-sin(3*34.6927133 )},{-cos(3*34.6927133)});
\coordinate (P6) at (0,{-sin(34.6927133 )},{-cos(34.6927133)});
\coordinate (P7) at (0,{sin(34.6927133 )},{-cos(34.6927133)});
\coordinate (P8) at (0,{sin(3*34.6927133 )},{-cos(3*34.6927133)});

\begin{scope}[thick]
    \draw[red] (P2) -- (P3) node[below,pos=0.8,xshift=4]{};
      \draw[blue] (P3)--(P5)node[below,pos=0.3,xshift=-4]{};
      \draw[blue] (P3)--(P4) node[below,pos=0.4,xshift=-3]{};
      \draw[blue] (P3)--(P8) node[below,pos=0.6,xshift=3]{};
      \draw[red] (P4)--(P5) node[below,pos=0.6,xshift=3]{};
       \draw[blue] (P4)--(P6) node[below,pos=0.6,xshift=3]{};
        \draw[blue] (P4)--(P7) node[below,pos=0.6,xshift=3]{};
         \draw[red] (P4)--(P8) node[below,pos=0.6,xshift=3]{};
          \draw[blue] (P5)--(P6) node[below,pos=0.6,xshift=3]{};
           \draw[red] (P6)--(P7) node[below,pos=0.6,xshift=3]{};
            \draw[blue] (P7)--(P8) node[below,pos=0.6,xshift=3]{};
            \draw[blue] (P2) -- (P8) node[below,pos=0.6]{};
\end{scope} 

\begin{scope}[dashed] %thick?
    \draw[blue] (P1) -- (P2) node[below,pos=0.6]{};
    \draw[red] (P1) -- (P5) node[below,pos=0.6]{};
    \draw[blue] (P1) -- (P6) node[below,pos=0.6]{};
    \draw[blue] (P1) -- (P7) node[below,pos=0.6]{};
    \draw[red] (P1) -- (P8) node[below,pos=0.6]{};
     \draw[blue] (P2)--(P5) node[below,pos=0.3,xshift=4]{};
\end{scope}

\filldraw[black] (P1) circle (0.5pt) node[anchor=east,xshift=-1mm]{\scriptsize $p_1$};
\filldraw[black] (P2) circle (0.5pt) node[anchor=west]{\scriptsize $p_2$};
\filldraw[black] (P3) circle (0.5pt) node[anchor=east]{\scriptsize $p_3$};
\filldraw[black] (P4) circle (0.5pt) node[anchor=west,xshift=2mm,yshift=-0.5mm]{\scriptsize $p_4$};
\filldraw[black] (P5) circle (0.5pt) node[anchor=east]{\scriptsize $p_8$};
\filldraw[black] (P6) circle (0.5pt) node[anchor=east]{\scriptsize $p_7$};
\filldraw[black] (P7) circle (0.5pt) node[anchor=west]{\scriptsize $p_6$};
\filldraw[black] (P8) circle (0.5pt) node[anchor=west]{\scriptsize $p_5$};
% \coordinate (L1) at (0,0,-1);
 %   \node[] at (L1) {4 and 1 and 1 case};
\end{tikzpicture}
\end{center}

The edge $[p_2,p_3]$ is a diameter of a circle $C$ of radius $a/2$; without loss of generality, $[p_2,p_3]$ is parallel to $e_2$ and $h=\dist(o,[p_2,p_3])$. Thus $C$ has radius $R(h)=\sqrt{1-h^2}$, so $a(h)=2R(h)$, and  $C=C(h)=\mathbb{S}^2\cap\left(e_3^\perp+he_3\right)$. Notice that $p_5,p_6,p_7$ and $p_8$ lie on the same great circle and hence are coplanar. This plane is orthogonal to $[p_2,p_3]$ and intersects its midpoint. Hence $[p_6,p_7]$ lies in  $-C(h)$ and is parallel to $e_1$ (so the red edges  $[p_2,p_3]$ and $[p_6,p_7]$ have orthogonal directions). Thus, $p_2=(0,R(h),h), p_3=(0,-R(h),h), p_6=(R(h),0,-h)$ and $p_7=(-R(h),0,-h)$. Since $\dist(p_1,p_2)=\dist(p_1,p_6)$, setting $p_1=(0,y_1,z_1)$ (where $z_1<0$), we have
\[
(y_1-R(h))^2+(z_1-h)^2=R(h)^2+y_1^2+(z_1+h)^2.
\]
This implies $y_1=-\frac{2h}{R(h)}z_1$. Substituting this into $y_1^2+z_1^2=1$, we obtain $z_1^2=\frac{1-h^2}{1+3h^2}$. Hence $z_1=-\sqrt{\frac{1-h^2}{1+3h^2}}$ since $z_1<0$. This yields $y_1=\frac{2h}{\sqrt{1+3h^2}}$, so $p_1=\left(0,\frac{2h}{\sqrt{1+3h^2}},-\sqrt{\frac{1-h^2}{1+3h^2}}\right)$. Setting $p_5=(x_1,0,z_2)$ with $z_2> 0$, the equation $\dist(p_3,p_5)=\dist(p_5,p_6)$ yields $p_5=\left(\frac{2h}{\sqrt{1+3h^2}},0,\sqrt{\frac{1-h^2}{1+3h^2}}\right)$.  Now $a(h)^2=\dist(p_1,p_5)^2$, so  $4R(h)^2=\dist(p_1,p_5)^2$, which is equivalent to $1-h^2=\frac{h^2+1}{1+3h^2}$. The  solutions to this equation are $h=0$ and $h=\pm 1/\sqrt{3}$; by construction, only $h^*=1/\sqrt{3}$ is possible. Therefore, the equation
$b(h^*)^2=\dist(p_1(h^*),p_2(h^*))^2$ yields $b(h^*)=1$ and $a(h^*)=2R(h^*)=2\sqrt{2}/\sqrt{3}$. Thus, by the congruent facets assumption, the area of each facet equals
\[
A(h^*)=\frac{1}{2}a(h^*)\sqrt{b(h^*)^2-\frac{a(h^*)^2}{4}}=\frac{\sqrt{2}}{3},
\]
so the total surface area equals $12A(h^*)=4\sqrt{2}\approx 5.657$.

%%%%%%%%%%%%%%%%%%%%%%%%%%
\subsection{The cases of equilateral facets}\label{equil-sec}

There are eight convex \emph{deltahedra}, which are polytopes with congruent equilateral facets \cite{F-W-1947}.  The only one with 8 vertices is the  \emph{dodecadeltahedron} (also called a \emph{snub disphenoid}), which is a member of Class 14 \cite{F-W-1947}. We show that it  is not inscribable. Let $q=0.169\ldots$ denote the positive real root of $2x^3+11x^2+4x-1$. The coordinates of a dodecadeltahedron with centroid at the origin are given by $(\pm t,r,0)$, $(0,-r,\pm t)$, $(\pm 1,s,0)$ and $(0,s,\pm 1)$ where $r=\sqrt{q}$, $s=\sqrt{\frac{1-q}{2q}}$ and $t=\sqrt{2-2q}$ (see \cite[p. 427]{Sloane-etal-1995}). With the centroid fixed, these points are unique up to rotations and dilations. The points $(\pm t, r, 0)$ and $(0,-r,\pm t)$ have distance $\sqrt{r^2+t^2}$ from the origin, while the points $(\pm 1,s,0)$ and $(0,s,\pm 1)$ have distance $\sqrt{1+s^2}$ from the origin. Since $r^2+t^2\neq 1+s^2$, we conclude that these points cannot all  be scaled by the same factor to lie on the unit sphere. This property is invariant under translations, so if a dodecadeltahedron has centroid not at the origin, we can apply rigid motions so that it has the above coordinates. Thus, Class 14 does not admit a member inscribed in $\mathbb{S}^2$ with congruent equilateral facets.  \qed

%%%%%%%%%%%%%%%%%%%%%%%%%
\subsection{Conclusion of the proof of Theorem \ref{mainThm}}

Comparing the results above for the 14 combinatorial classes, we see that the maximum surface area polytope in $\mathcal{M}_8$ is the member of Class 10 whose surface area equals 8. This completes the proof of Theorem \ref{mainThm}. \qed

%%%%%%%%%%%%%%%%%%%%%%%%%
\section{Further remarks}

There are 50 nonisomorphic combinatorial types of simplicial convex polytopes with 9 vertices. The methods used in this article can be used to greatly reduce the number of possible colorings to just a handful of cases. Each of those cases would then need to be analyzed individually using geometric methods to determine the surface area maximizer in $\mathcal{M}_9$. In general, the same  method from this paper will work for finding the surface area maximizer in $\mathcal{M}_K$ for any integer $K\geq 4$. %We leave the details to the interested reader. 

%%%%%%%%%%%%%%%%%%%%%%%%%%%%%%%%%%%%
\section*{Acknowledgments}

The authors would like to thank the Perspectives on Research In Science \&  Mathematics (PRISM) program at Longwood University for its  support in funding this research project.

%%%%%%%%%%%%%%%%%%%%%%%%%%%%%%
\bibliographystyle{plain}
\bibliography{main}

%%%%%%%%%%%%%%

\vspace{3mm}

\noindent {\sc Department of Mathematics \& Computer Science, Longwood University, U.S.A.}

\noindent {\it E-mail addresses:} {\tt nicolas.freeman@live.longwood.edu;  hoehnersd@longwood.edu; \\ \noindent ledfordjp@longwood.edu;  david.pack@live.longwood.edu;\\  \noindent brandon.walters@live.longwood.edu}

\end{document}